\theoremstyle{plain}
\newtheorem{theorem}{Theorem}[section]
\newtheorem{lemma}[theorem]{Lemma}
\newtheorem{proposition}[theorem]{Proposition}
\newtheorem{corollary}[theorem]{Corollary}
\theoremstyle{remark}
\newtheorem{Def}[theorem]{Definition}
\newtheorem{example}[theorem]{Example}
\newtheorem{Rem}[theorem]{Remark}
\numberwithin{equation}{section}
\newcommand{\CC}{{\mathbb C}}
\newcommand{\RR}{{\mathbb R}}
\newcommand\Span{{\mathop{\mbox{\rm span }}}}
\newcommand{\T}{\mathcal{T}}
\newcommand{\Mtz}{{M_\Lambda^1}}
\newcommand{\supp}{\mathop{\rm supp}}
\newcommand{\tcr}[1]{{#1}}
\def\<{\langle}
\def\>{\rangle}
\newcommand{\eps}{\epsilon}
\newcommand{\Pwr}[1]{x^{\lambda_{#1}}}
\newcommand{\xln}{\Pwr{n}}
\title{Embedding Theorems for M\"untz spaces}
\author[I. Chalendar]{Isabelle Chalendar}
\author[E. Fricain]{Emmanuel Fricain}
\author[D. Timotin]{Dan Timotin}
\address[I. Chalendar and E. Fricain]{Universit\'e de Lyon;
Universit\'e Lyon 1; INSA de Lyon; Ecole Centrale de Lyon;
CNRS, UMR5208, Institut Camille Jordan; 43 bld. du 11 novembre 1918,
F-69622 Villeurbanne Cedex, France}
\email{chalenda@math.univ-lyon1.fr, fricain@math.univ-lyon1.fr}
\address[D. Timotin]{Institute of Mathematics of the 
Romanian Academy, PO Box 1-764, Bucharest 014700, Romania}
\email{Dan.Timotin@imar.ro}
\thanks{The authors were partially supported by the ANR project FRAB}
\keywords{M\"untz space, embedding measure, weighted
  composition operator, compact operator, essential norm.}
\subjclass[2000]{46E15, 47A30, 47B33}
\begin{document}
\bibliographystyle{plain}
\begin{abstract}
We discuss boundedness and compactness properties of the embedding
$\Mtz\subset L^1(\mu)$, where $\Mtz$ is the closure of the monomials
$x^{\lambda_n}$ in $L^1([0,1])$ and $\mu$ is a finite positive Borel measure on the
interval $[0,1]$.
 In particular, we introduce a class of "sublinear" measures and
 provide a rather
 complete solution of the embedding problem for the class of quasilacunary
sequences $\Lambda$. Finally, we show how one can recapture  some
 of Al Alam's results on boundedness and essential norm of  weighted
composition operators from $\Mtz$ to $L^1([0,1])$.
\end{abstract}
\maketitle

\section{Introduction}

A classical result due to M\"untz says that, if $0=\lambda_0<\lambda_1<\dots<\lambda_n<\dots$ is an increasing sequence of nonnegative \tcr{real} numbers, then the linear span of $x^{\lambda_n}$ is dense in $C([0,1])$ if and only if $\sum_{n=1}^\infty \frac{1}{\lambda_n}=\infty$. When $\sum_{n=1}^\infty \frac{1}{\lambda_n}<\infty$, the \tcr{closed linear span of} the monomials $x^{\lambda_n}$ in different Banach spaces that contain them is usually not equal to the whole space; it is therefore interesting to study the new spaces thus obtained. \tcr{In particular, if $1\leq p<+\infty$ and $\sum_{n=1}^\infty \frac{1}{\lambda_n}<\infty$, then $M_\Lambda^p\varsubsetneq L^p([0,1])$, where $M_\Lambda^p$ is the closed linear span of the monomial $x^{\lambda_n}$, $n\geq 0$, in  $L^p([0,1])$.} The literature concerning this class of spaces of functions defined on
$[0,1]$, called \emph{M\"untz spaces}, is not very extensive. We may
refer principally to the two monographs \cite{be95} and \cite{gl05} and the references within,
as well as to the recent papers \cite{aa09,sp08}.  

The starting point of our research is formed by some recent results of
Ihab Al Alam, either published in \cite{aa09} or contained in his
thesis \cite{aathesis}. They deal with properties of weighted composition operators on M\"untz spaces, and it is noted therein that the properties of these operators are connected to embedding of the spaces into Lebesgue spaces.

In the present paper 
we have pursued this line of approach systematically. More precisely, we discuss boundedness and compactness properties of the embedding $\Mtz\subset L^1(\mu)$, where $\mu$ is a finite \tcr{positive} Borel measure on the interval $[0,1]$. In general the embedding properties are critically dependent on the nature of the sequence $\Lambda=(\lambda_n)$. 

The plan of the paper is the following. Section~2 contains preliminaries as well as general results concerning boundedness of the embedding, while Section~3 discusses compactness of the embedding. In Section~4 we introduce an important class of measures that we call sublinear, and which bear a certain resemblance to Carleson measures defined in the unit disc or half-plane. These allow in Section~5 a rather complete solution of the embedding problem for the class of quasilacunary sequences $\Lambda$. Section~6 discusses through some examples the problems that may appear in the general case, while Section~7 investigates the important sequence $\lambda_n=n^2$. Finally, in Section~8 we show how one can recapture in this context some of Al Alam's results on weighted composition operators.

\section{Embedding measures}

The basic reference for M\"untz spaces is \cite{gl05}; occasionally we will use also some results from \cite{be95}.

We denote by $m$ the Lebesgue measure on $[0,1]$ and by  $\Lambda=
(\lambda_n)_{n\ge 1}$  an increasing sequence of positive real numbers
with $\sum_{n\geq 1} \frac{1}{\lambda_n}<\infty$. 
The norm in $L^p(m)$ will be denoted simply by $\|\cdot\|_p$ (for $1\le p\le\infty$).
The closed linear span in $L^1(m)$ of the functions $x^{\lambda_n}$ is the M\"untz space $\Mtz$. The functions in $\Mtz$ are continuous in $[0,1)$ and real analytic in~$(0,1)$.

Note that sometimes M\"untz spaces are defined by including the value $\lambda_0=0$ in the family of monomials; but the statements are simpler if we start with $\lambda_1>0$. This is a matter of convenience only: all boundedness and compactness results below remain true if we add the one-dimensional space formed by the constants.

\begin{Def}
\tcr{A positive measure $\mu$ on $[0,1]$ is called \emph{$\Lambda$-embedding} if there is a constant $C>0$ such that \begin{equation}\label{eq:def-plongement}
\|p\|_{L^1(\mu)}\le C\|p\|_1, 
\end{equation}
for all polynomials $p$ in $M^1_\Lambda$.}
\end{Def}

It is immediate (by applying the condition to the functions $x^{\lambda_n}$) that if $\mu$ is $\Lambda$-embedding, then $\mu(\{1\})=0$, so we will suppose this condition satisfied for all measures appearing in this paper. If $0<\eps<1$, then the interval $[1-\eps, 1]$ will be denoted by $J_\eps$.

The next lemma is a useful technical tool.

\begin{lemma}\label{le:rho}
Suppose $\rho:\RR_+\to\RR_+$ is an increasing, $C^1$ function with
$\rho(0)=0$ such that $\mu(J_\eps)\le\rho(\eps)$ for all $\eps\in (0,1]$. Then for any continuous, positive, increasing function $g$  we have 
\[
\int_{[0,1]}g\, d\mu \le \int_0^1 g(x)\rho'(1-x)\, dx.
\]
\end{lemma}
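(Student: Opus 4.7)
The plan is to use a Fubini-type layer-cake argument to rewrite $\int g\,d\mu$ in terms of the tails $\mu([y,1])$, apply the hypothesis to these tails, and then recognize the resulting expression as the right-hand side through a second Fubini (equivalently, integration by parts).

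First, since $g$ is continuous and increasing on $[0,1]$, it is of bounded variation, and I may write $g(x)=g(0)+\int_{(0,x]}dg(y)$ as a Lebesgue--Stieltjes integral. Substituting into $\int g\,d\mu$ and applying Fubini,
\[
\int_{[0,1]}g\,d\mu = g(0)\mu([0,1]) + \int_{(0,1]}\mu([y,1])\,dg(y).
\]
The hypothesis $\mu(J_\eps)\le\rho(\eps)$, applied with $\eps=1-y$, gives $\mu([y,1])\le\rho(1-y)$ for every $y\in[0,1)$, and $\mu([0,1])=\mu(J_1)\le\rho(1)$. Hence
\[
\int_{[0,1]}g\,d\mu \le g(0)\rho(1) + \int_{(0,1]}\rho(1-y)\,dg(y).
\]

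Second, since $\rho$ is $C^1$ with $\rho(0)=0$, the fundamental theorem of calculus gives $\rho(1-y)=\int_y^1 \rho'(1-x)\,dx$ and $\rho(1)=\int_0^1 \rho'(1-x)\,dx$. Plugging these in and swapping the order of integration once more,
\[
g(0)\rho(1)+\int_{(0,1]}\rho(1-y)\,dg(y) = \int_0^1 \rho'(1-x)\Bigl(g(0)+\int_{(0,x]}dg(y)\Bigr)dx = \int_0^1 g(x)\rho'(1-x)\,dx,
\]
which is the desired inequality.

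The only technical subtlety is the use of Stieltjes integration: $g$ is only assumed continuous and increasing, not $C^1$. But continuity of $g$ ensures that the measure $dg$ has no atoms, so the distinction between $(0,x]$ and $[0,x]$ is harmless and the two Fubini applications are justified for the positive integrands involved. I do not anticipate any real obstacle here; alternatively one could prove the inequality first for smooth $g$ and recover the general case by monotone approximation, but the Stieltjes formalism seems cleaner.
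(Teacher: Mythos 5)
Your proof is correct and is essentially the paper's argument in different clothing: the two Fubini/layer-cake applications are exactly the two Stieltjes integrations by parts that the paper performs with $F(x)=\mu([0,x))$, and the key step --- bounding the tail $\mu([y,1])=\mu(J_{1-y})$ by $\rho(1-y)$ --- is identical. Your tail-based bookkeeping is arguably slightly cleaner, since the hypothesis is applied directly to nonnegative quantities and the sign manipulation $-F(x)\le\rho(1-x)-F(1)$ is avoided.
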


\begin{proof}
If $F(x)=\mu([0,x))$, then integration by parts yields
\[
\int_{[0,1]}g\, d\mu =\int_0^1 g(x)\, dF(x)= g(1)F(1)-\int_0^1 F(x)\, dg(x)
\]
(remember $F(0)=0$). Since $\mu(J_\eps)\le\rho(\eps)$ for all $\eps\in
(0,1]$ (and $\mu(\{1\})=0$), it follows that 
 $F(1)-F(x)\le \rho(1-x)$, or $-F(x)\le \rho(1-x)-F(1)$. Plugging this into the previous equation, integrating again by parts, and using $\rho(0)=0$, we obtain
\[
\begin{split}
\int_{[0,1]}g\, d\mu &\le g(1)F(1)-F(1)(g(1)-g(0))+ \int_0^1 \rho(1-x) \, dg(x)\\
&=F(1)g(0) +\int_0^1 \rho(1-x)\, dg(x)\\
&= F(1)g(0) -\rho(1)g(0)-\int_0^1 g(x)\, d(\rho(1-x))\\
&=g(0) (F(1)-\rho(1)) + \int_0^1 g(x)\rho'(1-x)\, dx.
\end{split}
\]
The proof is finished by noting that $F(1)=\mu([0,1))\le \rho(1)$.
\end{proof}

We intend to investigate necessary and sufficient conditions for a measure $\mu$ to be $\Lambda$-embedding. These conditions depend in general on the sequence $\Lambda$; also, in most cases there is a gap between necessity and sufficiency. The starting point for our approach is the following result from \cite[p.185, E.8.a]{be95}.

\begin{lemma}\label{le:BE}
For all $\eps\in (0,1]$ there exists a constant $c_\eps>0$ such that for any function $f\in \Mtz$ we have
\begin{equation}\label{eq:BE}
\sup_{0\le t\le 1-\eps}|f(t)|\le  c_\eps \int_{1-\eps}^1
|f(x)|dx.\end{equation}
In particular, the supremum in the left-hand side is majorized by
$c_\eps \|f\|_1$.  
\end{lemma}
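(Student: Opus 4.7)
The plan is to argue by contradiction, using the Clarkson--Erd\H{o}s theorem for M\"untz spaces as the main ingredient. Because $\sum 1/\lambda_n<\infty$, every $f\in\Mtz$ admits a pointwise expansion $f(x)=\sum a_n x^{\lambda_n}$ on $(0,1)$, with coefficient functionals continuous on $\Mtz$. In particular, each point evaluation $\delta_t$ at $t\in[0,1)$ is a bounded linear functional, and the estimates are uniform on compacta: for every compact $K\subset[0,1)$ there is $C_K>0$ with
\[
\sup_{t\in K}|f(t)|\le C_K\|f\|_1\qquad(f\in\Mtz).
\]
More importantly, any $L^1$-bounded sequence in $\Mtz$ has a subsequence converging uniformly on every compact subset of $[0,1)$ to a real-analytic function on $(0,1)$. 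This is exactly where the hypothesis $\sum 1/\lambda_n<\infty$ enters, and it is the key tool I would cite from \cite{gl05}.

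Fix $\eps\in(0,1]$ and suppose the conclusion fails. Then there exist $f_n\in\Mtz$ with
\[
\sup_{t\in[0,1-\eps]}|f_n(t)|=1\qquad\text{and}\qquad \int_{1-\eps}^{1}|f_n|\,dm\to0.
\]
By continuity of $f_n$ on $[0,1)$, the supremum is attained at some $t_n\in[0,1-\eps]$. Splitting $\|f_n\|_1=\int_0^{1-\eps}|f_n|\,dm+\int_{1-\eps}^{1}|f_n|\,dm$, the first term is $\le 1-\eps$ and the second tends to $0$, so $(f_n)$ is bounded in $L^1(m)$.

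By the normal-family statement recalled in step~1, after passing to a subsequence $f_n\to f$ uniformly on each compact subset of $[0,1)$, for some $f$ real-analytic on $(0,1)$. On any interval $[1-\eps,1-\delta]$ uniform convergence combined with $\int_{1-\eps}^{1}|f_n|\,dm\to 0$ forces $\int_{1-\eps}^{1-\delta}|f|\,dm=0$; letting $\delta\to 0$ and invoking real-analyticity yields $f\equiv 0$ on $(0,1)$. On the other hand, extract a further subsequence with $t_{n_k}\to t^*\in[0,1-\eps]$; uniform convergence on the compact set $[0,1-\eps]$ then gives $|f(t^*)|=\lim_k|f_{n_k}(t_{n_k})|=1$, contradicting $f\equiv 0$. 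This produces the desired constant $c_\eps$, and the second assertion in \eqref{eq:BE} follows immediately from the first since $\int_{1-\eps}^{1}|f|\,dx\le\|f\|_1$.

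The main obstacle is really the normal-family input: going from an $L^1(m)$-bound to uniform control on compacta of $[0,1)$ is nontrivial and is precisely the content of Clarkson--Erd\H{o}s type theorems. Once this is in hand, the contradiction argument is essentially soft, and since the statement is cited verbatim from \cite{be95}, I would ultimately defer to the proof given there.
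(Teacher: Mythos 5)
The paper gives no proof of this lemma: it is quoted from \cite[p.~185, E.8.a]{be95}, where it is established for M\"untz polynomials by direct Chebyshev/Remez-type estimates, and the passage from polynomials to all of $\Mtz$ is dismissed as a standard density argument. Your argument is therefore a genuinely different route: you first assert the weak bound $\sup_{t\in K}|f(t)|\le C_K\|f\|_1$ and then upgrade it to the localized bound $c_\eps\int_{1-\eps}^1|f|\,dx$ by a normal-families-plus-contradiction argument. The upgrade step is correct: the normalization, the resulting $L^1$-boundedness of $(f_n)$, the identification of the limit as zero on $[1-\eps,1)$ and hence everywhere by real-analyticity, and the contradiction at $t^*$ all go through. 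Two caveats are worth recording. First, within this paper the normal-family statement you invoke is exactly Lemma~\ref{le:ihab}, which the authors \emph{deduce from} Lemma~\ref{le:BE}; sourcing the compactness from there would make your argument circular. You avoid this by appealing instead to Clarkson--Erd\H{o}s--Schwartz theory, which is legitimate, but you should be explicit that this is where both the uniform bound on compacta and the precompactness (via analytic continuation to a complex region and Montel/Arzel\`a--Ascoli, together with the fact that the limit is again a Dirichlet-type series, hence real-analytic) come from --- these are not soft facts, and they carry essentially the same analytic content as the inequality being proved in its weak form with $\|f\|_1$ on the right; the genuinely new content of your proof is only the localization to $\int_{1-\eps}^1$. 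Second, the compactness argument yields no control on how $c_\eps$ grows as $\eps\to0$, whereas the direct proof in \cite{be95} is quantitative; the paper needs only a continuous decreasing majorant $c(\eps)$ for Theorem~\ref{th:generalsufficient}, so nothing is lost for the main results, but the explicit computation of $\kappa$ for $\lambda_n=n^2$ in Section~7 relies on precisely the kind of coefficient estimates your soft argument suppresses.
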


\tcr{Formally the inequality \eqref{eq:BE} is stated in \cite{be95} only for polynomials in $M_\Lambda^1$, but a standard argument shows that it can be extended to any function $f\in \Mtz$.}

Lemma~\ref{le:BE} has some immediate consequences for the embedding problem.

\begin{corollary}\label{co:compactsupport}
(i) If, for some $\eps>0$,  $\supp \mu\subset [0,1-\eps)$, then $\mu$ is $\Lambda$-embedding for any $\Lambda$, and 
\[
\|f\|_{L^1(\mu)}\le  c_\eps \|\mu\| \|f\|_1
\]
for all $f\in\Mtz$.

(ii) More generally, if for some $\eps>0$ the restriction of $\mu$ to the interval $[1-\eps, 1]$ is absolutely continuous with respect to \tcr{$m_{|[1-\eps,1]}$}, with essentially bounded density, then $\mu$ is $\Lambda$-embedding for any $\Lambda$.
\end{corollary}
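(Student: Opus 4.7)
My plan is to handle the two parts separately, both as short applications of Lemma \ref{le:BE}.

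For part (i), since $\supp\mu \subset [0,1-\eps)$, the $L^1(\mu)$ norm of $f\in\Mtz$ only sees the values of $f$ on $[0,1-\eps)$. On that interval, Lemma \ref{le:BE} gives the pointwise bound $|f(t)|\le c_\eps \|f\|_1$. Integrating this pointwise bound against $\mu$ yields
\[
\|f\|_{L^1(\mu)}=\int_{[0,1-\eps)}|f|\,d\mu \le c_\eps\|f\|_1 \,\mu([0,1-\eps))\le c_\eps\|\mu\|\,\|f\|_1,
\]
which is exactly the claim.

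For part (ii), the idea is to split $\mu=\mu_1+\mu_2$, where $\mu_1$ is the restriction of $\mu$ to $[0,1-\eps)$ and $\mu_2$ is the restriction to $J_\eps=[1-\eps,1]$. The measure $\mu_1$ has support in $[0,1-\eps)$, so part (i) already gives $\|f\|_{L^1(\mu_1)}\le c_\eps\|\mu_1\|\,\|f\|_1$. For $\mu_2$, the hypothesis provides an essentially bounded density $h\in L^\infty(m_{|J_\eps})$ with $d\mu_2=h\,dm_{|J_\eps}$, so
\[
\|f\|_{L^1(\mu_2)}=\int_{J_\eps}|f|\,h\,dm \le \|h\|_\infty\int_{J_\eps}|f|\,dm \le \|h\|_\infty\|f\|_1.
\]
Adding the two inequalities yields a constant $C=c_\eps\|\mu_1\|+\|h\|_\infty$ that works for the embedding of $\mu$.

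There is really no obstacle here: the only mildly non-obvious point is recognizing that Lemma~\ref{le:BE} is used twice in different guises in (i) (giving a sup bound on a subinterval away from $1$), while in (ii) the Lebesgue-density part of $\mu$ is handled directly from $\|f\|_1$ without invoking the M\"untz structure at all. The statement does not ask for an explicit constant in (ii), so no sharpening is needed.
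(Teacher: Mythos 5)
Your proof is correct and is exactly the argument the paper has in mind: the paper states this corollary without proof as an ``immediate consequence'' of Lemma~\ref{le:BE}, and your two-step use of the sup bound on $[0,1-\eps]$ for the part of $\mu$ away from $1$, plus the trivial $L^\infty$-density estimate near $1$ in part (ii), is the intended derivation. No issues.
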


\begin{Rem}\label{Rem:rem-plongement}
\tcr{If $\mu$ is $\Lambda$-embedding, then $M_\Lambda^1\subset L^1(\mu)$ and $\|f\|_{L^1(\mu)}\leq c \|f\|_1$ for all $f\in M_\Lambda^1$. Indeed if $f\in M_\Lambda^1$, then there exists a sequence $(p_n)_n$ of polynomials in $M_\Lambda^1$ such that $\|f-p_n\|_1\to 0$, $n\to +\infty$. By~\eqref{eq:def-plongement} $(p_n)_n$ is a Cauchy sequence in $L^1(\mu)$, whence it converges to a function $g$ in $L^1(\mu)$. In particular, there exists a subsequence $(p_{n_k})_k$ which converges almost everywhere (with respect to $\mu$) to $g$. But according to Lemma~\ref{le:BE},  $(p_n)_n$ tends to $f$ uniformly on every compact of $[0,1)$, so $g(t)=f(t)$ for almost every $t\in [0,1)$ with respect to $\mu$; since $\mu(\{1\})=0$, we have $g=f$ in $L^1(\mu)$. Therefore $M_\Lambda^1\subset L^1(\mu)$. Moreover, since $(p_n)_n$ tends to $f$ in $L^1(\mu)$ and also in $L^1(m)$,~\eqref{eq:def-plongement} implies that $\|f\|_{L^1(\mu)}\leq c \|f\|_1$, which proves the claim.}

\tcr{Using standard arguments based on the closed graph theorem, it is easy to see that it is sufficient to have the set inclusion $M_\Lambda^1\subset L^1(\mu)$ in order to obtain that $\mu$ is $\Lambda$-embedding.} For a $\Lambda$-embedding $\mu$ we  denote by $\iota_\mu$ the embedding operator $\iota_\mu: M_\Lambda^1\subset L^1(\mu)$.
\end{Rem}


To obtain a more general \tcr{sufficient condition}, note
that  the smallest constant that can
appear in the right-hand side of (\ref{eq:BE}) is a positive, decreasing function of $\eps$, and thus admits
decreasing and continuous majorants. In the sequel we fix such a
majorant, denoted by $c(\eps)$.      
Then $\kappa(t):=c(1-t)$ is positive, increasing \tcr{and continuous} on $[0,1)$. Using~\eqref{eq:BE} for any monomial $x^{\lambda_i}$, we see that $\kappa(t)\to\infty$ for $t\to 1$, and the order of increase is at least~$(1-t)^{-1}$.

\begin{theorem}\label{th:generalsufficient}
If $\kappa\in L^1(\mu)$, then $\mu$ is $\Lambda$-embedding and $\|\iota_\mu\|\le\|\kappa\|_{L^1(\mu)}$.
\end{theorem}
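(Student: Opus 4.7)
The plan is essentially to upgrade Lemma~\ref{le:BE} from a supremum estimate on fixed subintervals to a pointwise bound on $[0,1)$, and then integrate that pointwise bound against $\mu$.

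First I would observe that for any $t \in [0,1)$, setting $\eps = 1-t$ in \eqref{eq:BE} gives $|f(t)| \le c_{1-t}\|f\|_1 \le c(1-t)\|f\|_1 = \kappa(t)\|f\|_1$ for every $f \in \Mtz$, since $c(\cdot)$ majorizes the best constant. This is the key inequality: it converts the one-sided (fixed $\eps$) Bernstein–Markov–type estimate of Borwein–Erd\'elyi into a genuine pointwise weight bound with weight $\kappa$.

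Next, using the standing assumption $\mu(\{1\}) = 0$, I would write
\[
\|f\|_{L^1(\mu)} = \int_{[0,1)} |f(t)|\, d\mu(t) \le \int_{[0,1)} \kappa(t) \|f\|_1\, d\mu(t) = \|\kappa\|_{L^1(\mu)} \|f\|_1.
\]
Applied to polynomials $p \in M_\Lambda^1$, this is exactly \eqref{eq:def-plongement} with constant $C = \|\kappa\|_{L^1(\mu)}$, so $\mu$ is $\Lambda$-embedding. Since by Remark~\ref{Rem:rem-plongement} the embedding extends to all of $\Mtz$ with the same constant, the norm bound $\|\iota_\mu\| \le \|\kappa\|_{L^1(\mu)}$ follows immediately.

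There is no real obstacle here; the only thing to be careful about is measurability of $\kappa$ on $[0,1)$ (which is immediate from its continuity, built into the choice of the majorant $c$) and the legitimacy of integrating only over $[0,1)$ (which comes from $\mu(\{1\}) = 0$). The whole statement is essentially the observation that a continuous decreasing majorant of the constants $c_\eps$ automatically produces a pointwise weight through the change of variable $t = 1-\eps$.
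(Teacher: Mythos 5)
Your proof is correct, and it is a genuine streamlining of the paper's argument rather than a different one in substance. Both proofs rest on the same key input, Lemma~\ref{le:BE}, and on the same idea that $\kappa$ dominates the norm of point evaluation; but where you extract the clean pointwise bound $|f(t)|\le \kappa(t)\|f\|_1$ for $t\in[0,1)$ (by taking $\eps=1-t$ and using that $c(\cdot)$ majorizes the optimal constants) and then integrate it directly against $\mu$ over $[0,1)$, the paper instead discretizes $\kappa$ by its dyadic level sets $E_{m,k}=(a^{(m)}_{k-1},a^{(m)}_k]$, bounds $\sup_{E_{m,k}}|f|$ by $\kappa(a^{(m)}_k)\|f\|_1$, sums over $k$, and passes to the limit via monotone convergence. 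The discretization buys nothing that your one-line pointwise estimate does not already give: $|f|$ and $\kappa$ are continuous on $[0,1)$, so there is no measurability issue, and $\mu(\{1\})=0$ justifies restricting the integral to $[0,1)$, exactly as you note. Your closing step of extending from polynomials to all of $\Mtz$ via Remark~\ref{Rem:rem-plongement} is also fine, though since \eqref{eq:BE} holds for every $f\in\Mtz$ you could equally well have applied the pointwise bound to arbitrary $f\in\Mtz$ from the start and obtained $\|\iota_\mu\|\le\|\kappa\|_{L^1(\mu)}$ in one stroke.
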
 

\begin{proof} For $k,m\ge 1$,
define the set $E_{m,k}=\{x\in [0,1): \frac{k-1}{2^m}< \kappa(x)\le\frac{k}{2^m}\} $. Since $\kappa$ is increasing and continuous, we have $E_{m,k}=(a^{(m)}_{k-1},a^{(m)}_k]$ for some $a^{(m)}_k\in [0,1)$. If the functions $\kappa_m$ are defined by
\[
\kappa_m(x)=\sum_{k=0}^\infty \kappa(a^{(m)}_k) \chi_{E_{m,k}}(x),
\]
then $\kappa_m$ is a decreasing sequence of functions tending everywhere to $\kappa$. By the monotone convergence theorem, it follows from the hypothesis that 
\begin{equation}\label{eq:gensuf1}
\sum_{k=1}^\infty \mu ((a^{(m)}_{k-1},a^{(m)}_k]) \kappa(a^{(m)}_k)\to \|\kappa\|_{L^1(\mu)}<\infty.
\end{equation}

On the other hand, from~\eqref{eq:BE} it follows that, \tcr{for any $f\in M_\Lambda^1$}, 
\[
\sup_{a^{(m)}_{k-1}<x\le a^{(m)}_k} |f(x)|\le \kappa(a^{(m)}_k)\|f\|_1,
\]
whence (taking into account that $f(0)=0$ and $\mu(\{1\})=0$)
\[
\begin{split}
\int_{[0,1]}|f(x)|\, d\mu(x)& =
\sum_{k=1}^\infty \int_{(a^{(m)}_{k-1},a^{(m)}_k]}| f(x)|\, d\mu(x)\\
&\le \sum_{k=1}^\infty \left(\sup_{a^{(m)}_{k-1}<t\le a^{(m)}_k} |f(x)|\right) \mu((a^{(m)}_{k-1},a^{(m)}_k])\\
&\le \sum_{k=1}^\infty  \kappa(a^{(m)}_k)\|f\|_1 \mu((a^{(m)}_{k-1},a^{(m)}_k]).
\end{split}
\]
Letting now $m\to\infty$ and using~\eqref{eq:gensuf1}, it follows that $\mu$ is $\Lambda$-embedding and $\|\iota_\mu\|\le\|\kappa\|_{L^1(\mu)}$.
\end{proof}

Since $\kappa$ is \tcr{continuous,} increasing and $\kappa(t)\to +\infty$ for $t\to 1$, the condition $\kappa\in L^1(\mu)$ is related to the asymptotics of $\mu$ in~1. The next corollary gives a sufficient condition on $\mu$ expressed in terms of $\mu(J_\eps)$.

\begin{corollary}\label{co:mu(Jeps)}
Suppose  $\rho:\RR_+\to\RR_+$ is an increasing, $C^1$ function with $\rho(0)=0$ such that $\int_0^1 \kappa(x)\rho'(1-x)\, dx<\infty$. If
$\mu(J_\eps)\le\rho(\eps)$ for all $\eps\in (0,1]$, then $\mu$ is $\Lambda$-embedding.
\end{corollary}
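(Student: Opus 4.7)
The goal is to apply Theorem~\ref{th:generalsufficient}, so the task reduces to showing that $\kappa\in L^1(\mu)$; the hypothesis $\int_0^1 \kappa(x)\rho'(1-x)\,dx<\infty$ together with Lemma~\ref{le:rho} is precisely what is needed to control $\int_{[0,1]} \kappa\,d\mu$.

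The only subtlety is that $\kappa$ is unbounded near $1$, so Lemma~\ref{le:rho} (which implicitly uses the value $g(1)$ via the integration by parts in its proof) cannot be applied directly to $g=\kappa$. I would circumvent this by truncation: for each integer $n\ge 1$ set $g_n(x)=\min(\kappa(x),n)$, extended continuously to $[0,1]$ (since $\kappa$ is continuous and increasing on $[0,1)$ with $\kappa(t)\to\infty$, the function $g_n$ is well defined, continuous, positive and increasing on all of $[0,1]$). Applying Lemma~\ref{le:rho} to $g_n$ gives
\[
\int_{[0,1]} g_n\,d\mu \le \int_0^1 g_n(x)\rho'(1-x)\,dx \le \int_0^1 \kappa(x)\rho'(1-x)\,dx,
\]
the last inequality because $g_n\le\kappa$ and $\rho'\ge0$.

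Since $g_n\nearrow \kappa$ pointwise on $[0,1)$ and $\mu(\{1\})=0$, the monotone convergence theorem yields
\[
\int_{[0,1]}\kappa\,d\mu=\lim_{n\to\infty}\int_{[0,1]} g_n\,d\mu\le \int_0^1 \kappa(x)\rho'(1-x)\,dx<\infty,
\]
so $\kappa\in L^1(\mu)$. Theorem~\ref{th:generalsufficient} then immediately implies that $\mu$ is $\Lambda$-embedding, finishing the proof. The only real obstacle is this truncation step, needed because $\kappa$ blows up at $1$; once it is handled, everything else is a direct chaining of the preceding lemma and theorem.
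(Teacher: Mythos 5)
Your proof is correct and follows essentially the same route as the paper, whose entire proof is ``apply Lemma~\ref{le:rho} to the case $g=\kappa$'' followed implicitly by Theorem~\ref{th:generalsufficient}. The truncation step you add is a legitimate refinement rather than a detour: since $\kappa(t)\to\infty$ as $t\to1$, Lemma~\ref{le:rho} as stated for continuous increasing functions on $[0,1]$ does not apply verbatim to $g=\kappa$, and your $g_n=\min(\kappa,n)$ plus monotone convergence argument supplies exactly the justification the paper leaves implicit.
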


\begin{proof}
The proof follows by applying Lemma~\ref{le:rho} to the case $g=\kappa$.
\end{proof}

In general $\kappa$ has not an explicit formula in terms of the sequence $\Lambda$.  We would like to obtain more treatable formulas; this will be done below for some special classes of $\Lambda$.

\section{Compactness and essential norm}

A related problem is that of the compactness of  the embedding $\iota_\mu$. Here, the starting point is a result stated as Lemma 4.2.5 in \cite{aathesis}; we will sketch a proof for completeness.

\begin{lemma}\label{le:ihab}
If $(f_m)_m\subset\Mtz$, $\|f_m\|_1\le1$ for all $m$, then there exists a subsequence $(f_{m_k})_k$ which converges uniformly on every compact subset of $[0,1)$.
\end{lemma}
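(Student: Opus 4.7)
The plan is an Arzel\`a--Ascoli argument applied on each compact subinterval $[0,1-1/k]$, followed by a Cantor diagonal extraction to produce a single subsequence converging uniformly on every compact of $[0,1)$.

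Local uniform boundedness is immediate from Lemma~\ref{le:BE}: for each $\eps\in(0,1)$, $\sup_{0\le t\le 1-\eps}|f_m(t)|\le c_\eps\|f_m\|_1\le c_\eps$. For local equicontinuity, I would invoke a Bernstein--Markov-type estimate for M\"untz polynomials (available in \cite{be95,gl05}) of the form $\sup_{x\in[0,a]}|p'(x)|\le K(a,\Lambda)\,\|p\|_{L^\infty[0,1]}$, valid uniformly in the degree of $p$. A rescaling $y\mapsto(1-\delta)y$, which preserves the class of M\"untz polynomials with exponents in $\Lambda$, combined with Lemma~\ref{le:BE}, upgrades this into an $L^1$-version $\sup_{x\in[0,a]}|p'(x)|\le C_a\|p\|_1$. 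Passing to $L^1$-limits --- legitimate because Lemma~\ref{le:BE} already yields uniform convergence on compacts of $[0,1)$ for any $L^1$-approximation --- the Lipschitz estimate transfers to every $f\in\Mtz$, so the family $(f_m)$ is equicontinuous on every $[0,a]$ with $a<1$.

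Arzel\`a--Ascoli then extracts a subsequence converging uniformly on $[0,1/2]$, a further subsequence converging uniformly on $[0,3/4]$, and so on; the diagonal subsequence converges uniformly on every $[0,1-2^{-k}]$, hence on every compact subset of $[0,1)$. The main obstacle is pinpointing the Bernstein--Markov inequality in the literature in precisely the form needed; a possible alternative is to extend each $f_m$ holomorphically to a slit disk $\mathbb{D}\setminus(-1,0]$ and appeal to Montel's theorem, but controlling the holomorphic extension uniformly in $m$ from only the real $L^\infty$-bound appears to demand a separate argument and is no easier than the Bernstein approach.
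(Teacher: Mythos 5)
Your proposal is correct and follows essentially the same route as the paper: uniform boundedness on compacts from Lemma~\ref{le:BE}, equicontinuity from the bounded Bernstein-type inequality for M\"untz polynomials (the paper cites \cite[p.~178, E.3.d]{be95}, reducing first to exponents $\lambda_n\ge 1$), then Arzel\`a--Ascoli and a diagonal extraction over $\eps=1/N$. The only cosmetic difference is that you pass the derivative bound to general $f\in\Mtz$ by an $L^1$-limit argument, whereas the paper works directly with polynomials and the same limiting principle implicitly.
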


\begin{proof}
For any $\eps\in (0,1]$ the restrictions of $f_m$ to $[0,1-\eps]$ are
uniformly bounded by Lemma~\ref{le:BE}. To prove the result, it is
enough to consider only functions $f_m$ in the linear span of
$x^{\lambda_n}$ with $\lambda_n\ge 1$; it follows then from a
Bernstein-type inequality proven in \cite[p.178, E.3.d]{be95} that the
restrictions of $f'_m$ to $[0,1-2\eps]$ are uniformly bounded. By the
Arzela--Ascoli theorem, \tcr{the sequence ${f_n}_{|[0,1-2\eps]}$ contains a subsequence which is uniformly convergent on $[0,1-2\eps]$}. Applying this to $\eps=1/N$ for all positive integer $N$ and using a diagonal procedure, one obtains a subsequence $f_{m_j}$ that converges uniformly on all compact of \tcr{$[0,1)$} to a function $f$.  It is then easy to show that $f\in\Mtz$.
\end{proof}

We may now improve Corollary~\ref{co:compactsupport}.

\begin{proposition}\label{pr:bicompact}
If $\supp \mu\subset [0,1-\eps]$, then $\iota_\mu$  is compact.
\end{proposition}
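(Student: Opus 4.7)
The plan is to prove compactness directly from the definition by invoking Lemma~\ref{le:ihab}. Take an arbitrary bounded sequence $(f_m)_m \subset M_\Lambda^1$, which after normalization we may assume satisfies $\|f_m\|_1 \le 1$. We need to extract a subsequence whose images under $\iota_\mu$ converge in $L^1(\mu)$.

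By Lemma~\ref{le:ihab}, there is a subsequence $(f_{m_k})_k$ and some $f \in M_\Lambda^1$ such that $f_{m_k} \to f$ uniformly on every compact subset of $[0,1)$. The crucial observation is that the hypothesis $\supp \mu \subset [0, 1-\eps]$ means that $\mu$ is supported in a set which is itself compact in $[0,1)$; hence we get uniform convergence on a set which carries all of the mass of $\mu$.

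From here the conclusion is essentially immediate: we have
\[
\|f_{m_k} - f\|_{L^1(\mu)} = \int_{[0,1-\eps]} |f_{m_k}(x) - f(x)|\, d\mu(x) \le \mu([0,1-\eps])\, \sup_{x \in [0,1-\eps]} |f_{m_k}(x) - f(x)|,
\]
and the right-hand side tends to $0$ as $k \to \infty$ by the uniform convergence on $[0,1-\eps]$. Thus $\iota_\mu(f_{m_k})$ converges in $L^1(\mu)$, proving compactness of $\iota_\mu$.

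There is no serious obstacle here; the work has already been done in Lemma~\ref{le:ihab}, and the compactness of the support is what bridges ``uniform convergence on compacts of $[0,1)$'' to ``convergence in $L^1(\mu)$''. One might note as an aside that the limit $f$ indeed lies in $M_\Lambda^1$ (so $\iota_\mu f$ is well-defined), but this was already established in the proof of Lemma~\ref{le:ihab}.
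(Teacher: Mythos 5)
Your proof is correct and follows exactly the paper's argument: apply Lemma~\ref{le:ihab} to extract a subsequence converging uniformly on $[0,1-\eps]$, and use the fact that $\mu$ is supported there to conclude convergence in $L^1(\mu)$. The extra details you supply (the explicit estimate and the remark that $f\in M_\Lambda^1$) are fine but not a different route.
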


\begin{proof}
The proof is an immediate consequence of Lemma~\ref{le:ihab}: if we take a sequence $(f_n)_n$ in the unit ball of $\Mtz$, then the subsequence  obtained therein converges uniformly in $[0,1-\eps]$, and therefore also in $L^1(\mu)$.
\end{proof}

The following notation will be useful: if $\mu$ is a positive measure
on $[0,1]$, we will denote $\mu_m$ the measure equal to 
$\mu$ on $[0, 1-\frac{1}{m})$ and $0$ elsewhere, and $\mu'_m=\mu-\mu_m$ (so $\mu'_m$ is the measure that coincides with $\mu$ on 
$J_{\frac{1}{m}}$ and is $0$ elsewhere).

Next comes a general abstract compactness result, related to Corollary~\ref{co:mu(Jeps)}.

\begin{proposition}\label{pr:abstractcompactness}
Let $\Mtz$ be a M\"untz space, and suppose that $\rho:\RR_+\to \RR_+$ is an increasing continuous function, with $\rho(0)=0$, such that any measure $\mu$ with $\mu(J_\eps)\le C\rho(\eps)$ is $\Lambda$-embedding, with embedding constant at most $C$. Then, for any measure $\mu$ that satisfies $\lim_{\eps\to0}\frac{\mu(J_\eps)}{\rho(\eps)}= 0$ the embedding $\iota_\mu$ is compact.
\end{proposition}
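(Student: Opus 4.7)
The plan is to approximate $\iota_\mu$ in operator norm by compact operators, using the decomposition $\mu=\mu_m+\mu'_m$. Via the isometric identification $L^1(\mu)=L^1(\mu_m)\oplus L^1(\mu'_m)$ (extension by zero on the complementary piece of $[0,1]$), write $\iota_\mu=A_m+B_m$, where $A_m$ is the composition of $\iota_{\mu_m}$ with the isometric inclusion $L^1(\mu_m)\hookrightarrow L^1(\mu)$, and $B_m$ is defined analogously from $\iota_{\mu'_m}$. For each fixed $m$, the support of $\mu_m$ is contained in $[0,1-1/m]\subset[0,1-1/(2m)]$, so Proposition~\ref{pr:bicompact} gives compactness of $\iota_{\mu_m}$, and hence of $A_m$.

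The heart of the argument is the bound $\|B_m\|\to 0$. Set
\[
C_m:=\sup_{0<\delta\le 1/m}\frac{\mu(J_\delta)}{\rho(\delta)},
\]
which tends to $0$ as $m\to\infty$ by the assumption on $\mu$. I would then verify that $\mu'_m(J_\eps)\le C_m\rho(\eps)$ for every $\eps\in(0,1]$. Indeed, if $\eps\le 1/m$, then $J_\eps\subset J_{1/m}$, so $\mu'_m(J_\eps)=\mu(J_\eps)\le C_m\rho(\eps)$ by the definition of $C_m$; if $\eps>1/m$, then $\mu'_m(J_\eps)=\mu(J_{1/m})\le C_m\rho(1/m)\le C_m\rho(\eps)$ by monotonicity of $\rho$. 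Applying the standing hypothesis on $\rho$ to $\mu'_m$ with constant $C_m$ then yields $\|B_m\|=\|\iota_{\mu'_m}\|\le C_m$.

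Combining the two steps, $\iota_\mu$ is a norm limit of compact operators, and so is itself compact. The only mildly delicate point I expect is the two-case estimate for $\mu'_m(J_\eps)$ and the interplay with the monotonicity of $\rho$; the compactness of $A_m$ and the decay of $C_m$ are immediate consequences of results already in place.
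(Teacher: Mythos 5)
Your proof is correct and follows essentially the same route as the paper: decompose $\mu=\mu_m+\mu'_m$, get compactness of the piece supported in $[0,1-\frac{1}{m}]$ from Proposition~\ref{pr:bicompact}, and use the hypothesis on $\rho$ to show $\|\iota_{\mu'_m}\|\to 0$. The only difference is that you spell out the two-case verification that $\mu'_m(J_\eps)\le C_m\rho(\eps)$ for all $\eps$, which the paper leaves implicit.
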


\begin{proof}
Suppose that $\iota_{\mu_m}$ are the embeddings $\Mtz\subset L^1(\mu_m)$. We may regard $L^1(\mu_m)$ as a subspace of $L^1(\mu)$, and the operators $\iota_{\mu_m}$ as taking values in $L^1(\mu)$.
The hypothesis implies that $\mu'_m=\mu- \mu_m$ is
$\Lambda$-embedding, with embedding constant tending to $0$. Therefore
$\|\iota_\mu-\iota_{\mu_m}\|\to 0$. Since  each $\iota_{\mu_m}$ is
compact, by Proposition~\ref{pr:bicompact}, the result follows.
\end{proof}

For a general bounded embedding, one can use the measures $\mu'_m$  to obtain a formula for the essential norm of $\iota_\mu$. We start with an abstract simple lemma that will be used also in Section~\ref{se:compo}.

\begin{lemma}\label{le:abstractessential}
Let $X$ be a Banach space, $(E,\nu)$ a \tcr{measurable} space, $T:X\to L^1(\nu)$ a bounded operator, 
$(E_m)_m$  a decreasing sequence of measurable subsets of $E$ such that $\nu(\bigcap_m E_m )=0$. \tcr{Suppose that $(I-P_m)$ is compact for all $m$, where $P_m$ denotes the natural projection of $L^1(\nu)$ onto $L^1(E_m,\nu)$}. Then \tcr{the essential norm of $T$ is given by}
\[
\|T\|_e=\lim_{m\to\infty} \|P_mT\|. 
\]
\end{lemma}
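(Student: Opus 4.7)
The plan is to establish the formula by proving two inequalities, after first verifying that the limit actually exists. Since $E_{m+1}\subset E_m$, the projection $P_{m+1}$ factors as $P_{m+1}=P_{m+1}P_m$, so $\|P_{m+1}T\|\le\|P_{m+1}\|\,\|P_mT\|\le\|P_mT\|$ (using that $P_{m+1}$ is multiplication by a characteristic function and hence contractive). Thus the sequence $(\|P_mT\|)_m$ is decreasing and the limit exists.

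For the upper bound $\|T\|_e\le\lim_m\|P_mT\|$, decompose $T=P_mT+(I-P_m)T$. By hypothesis, $I-P_m$ is compact, so $(I-P_m)T$ is a compact operator from $X$ into $L^1(\nu)$. Hence
\[
\|T\|_e\le \|T-(I-P_m)T\|=\|P_mT\|
\]
for every $m$, and letting $m\to\infty$ gives the desired inequality.

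For the lower bound, the key fact I would prove is that $\|P_m K\|\to 0$ for every compact operator $K:X\to L^1(\nu)$. First, $P_m\to 0$ strongly on $L^1(\nu)$: for a fixed $f\in L^1(\nu)$, since $(E_m)$ decreases to a $\nu$-null set, $\chi_{E_m}\to 0$ $\nu$-almost everywhere, and dominated convergence (with dominant $|f|$) gives $\|P_mf\|_1=\int|f|\chi_{E_m}\,d\nu\to 0$. To upgrade strong convergence to uniform convergence on the compact set $\overline{K(B_X)}$, I would use a standard $\eps$-net argument: given $\eps>0$, pick a finite set $\{Kx_1,\ldots,Kx_n\}$ that is $\eps/2$-dense in $K(B_X)$; choose $m$ large enough that $\|P_mKx_i\|_1<\eps/2$ for all $i$; then for any $x\in B_X$, pick $i$ with $\|Kx-Kx_i\|_1<\eps/2$ and estimate
\[
\|P_mKx\|_1\le \|P_m(Kx-Kx_i)\|_1+\|P_mKx_i\|_1<\eps/2+\eps/2=\eps.
\]
Once this is in hand, for any compact $K$ and any $m$,
\[
\|P_mT\|\le\|P_m(T-K)\|+\|P_mK\|\le\|T-K\|+\|P_mK\|;
\]
letting $m\to\infty$ yields $\lim_m\|P_mT\|\le\|T-K\|$, and taking the infimum over compact $K$ gives $\lim_m\|P_mT\|\le\|T\|_e$.

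The only genuinely delicate step is the uniform convergence $\|P_mK\|\to0$, which is where compactness of $K$ is used in an essential way; everything else is routine manipulation of the essential norm and the decomposition $T=P_mT+(I-P_m)T$.
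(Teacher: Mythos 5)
Your proof is correct, and for the lower bound it takes a genuinely different route from the paper. The upper bound $\|T\|_e\le\lim_m\|P_mT\|$ is handled exactly as in the paper, via the decomposition $T=P_mT+(I-P_m)T$ and the compactness of $(I-P_m)T$. For the reverse inequality the paper argues sequentially: it picks unit vectors $x_m$ with $\|P_mTx_m\|\ge\|P_mT\|-\eps$, extracts a convergent subsequence $Kx_{m_j}\to g$, and uses that $\int_{E_m}|g|\,d\nu\to0$ (your strong-convergence fact, applied to the single limit function $g$) to bound $\|(T-K)x_{m_j}\|$ from below by $\|P_{m_j}T\|-2\eps$. You instead isolate the operator-norm statement $\|P_mK\|\to0$ for every compact $K$, obtained by combining the strong convergence $P_m\to0$ (dominated convergence, since $\chi_{E_m}\to0$ $\nu$-a.e.\ because the $E_m$ decrease to a $\nu$-null set) with a finite $\eps$-net in $K(B_X)$ and the contractivity of $P_m$; the conclusion then follows from $\|P_mT\|\le\|T-K\|+\|P_mK\|$. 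Both arguments exploit compactness of $K$ in equivalent ways (sequential compactness of $(Kx_m)$ versus total boundedness of $K(B_X)$); yours is more modular and yields the reusable principle that a uniformly bounded, strongly null sequence of operators tends to zero in norm when composed with a compact operator, while the paper's stays entirely at the level of sequences and avoids the net lemma. A small bonus of your write-up is the explicit justification that $(\|P_mT\|)_m$ is decreasing via $P_{m+1}=P_{m+1}P_m$ and $\|P_{m+1}\|\le1$, which the paper asserts without detail.
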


\begin{proof} Note that the sequence $(\|P_mT\|)_m$ is decreasing, so the limit exists.
 Since $T-P_mT$ is compact for each $m$, it is obvious that 
$\|T\|_e\le \lim_{m\to\infty} \|P_mT\|$. To prove the reverse
inequality, let $\eps>0$ and $K:X\to L^1(\nu)$ be a compact
operator. Take a sequence $(x_m)_m\subset  X$ such that $\|x_m\|=1$ and $\|P_mT x_m\|_{\tcr{L^1(E_m,\nu)}}\ge \|P_mT\|-\eps$. Then $(Kx_m)_m$ contains a convergent subsequence, say $Kx_{m_j}\to g\in L^1(\nu)$. We have
\[
 \|(T-K)x_{m_j}\|_{\tcr{L^1(\nu)}}\ge \|Tx_{m_j} -g\|_{\tcr{L^1(\nu)}}-\|Kx_{m_j}-g\|_{\tcr{L^1(\nu)}},
\]
whence
 \begin{equation}\label{eq:3.0}
 \limsup_j \|(T-K)x_{m_j}\|_{\tcr{L^1(\nu)}}\ge\limsup_j\|Tx_{m_j} -g\|_{\tcr{L^1(\nu)}}.
\end{equation}
Since $g\in L^1(\nu)$ and $\nu(\bigcap_m E_m )=0$, there exists a
positive integer $N$ such that 
$ \int_{E_m}|g|\,d\nu \le \eps$
for all $m\ge N$. Then, if $m_j\ge N$, we have
\[
 \begin{split}
  \|Tx_{m_j} -g\|_{\tcr{L^1(\nu)}}& =\int_E |Tx_{m_j} -g|\, d\nu 
\ge \int_{E_{m_j}} |Tx_{m_j} -g|\, d\nu
\ge \int_{E_{m_j}} |Tx_{m_j}| d\nu-\eps\\
&=\|P_{m_j}Tx_{m_j}\|_{\tcr{L^1(E_{m_j},\nu)}}-\eps\ge \|P_{m_j}T\|-2\eps.
 \end{split}
\]
From~\eqref{eq:3.0} it follows then that
\[
\|T-K\| \ge \lim_m \|P_{m}T\|-2\eps.
\]
Since this is true for any compact $K$, we have 
\[
 \|T\|_e\ge \lim_m \|P_{m}T\|-2\eps.
\]
Letting $\eps\to 0 $ yields the desired inequality.
\end{proof}

\begin{theorem}\label{th:essentialnorm}
 Let $\Mtz$ be a M\"untz space, and suppose that $\mu$ is an embedding measure. Then
\begin{equation}\label{eq:essentialnorm}
 \|\iota_\mu\|_e = \lim_{n\to\infty}  \|\iota_{\mu'_n}\|.
\end{equation}
\end{theorem}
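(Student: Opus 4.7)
The plan is to reduce the statement directly to Lemma~\ref{le:abstractessential} by choosing the ingredients there as $X=\Mtz$, $(E,\nu)=([0,1],\mu)$, $T=\iota_\mu$, and $E_m=J_{1/m}=[1-\tfrac1m,1]$. The sequence $(E_m)_m$ is decreasing with $\bigcap_m E_m=\{1\}$, and by our standing assumption $\mu(\{1\})=0$, so the measure-theoretic hypotheses of the lemma are met.

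Next I would identify the two pieces that appear in the lemma. The projection $P_m:L^1(\mu)\to L^1(E_m,\mu)$ is multiplication by $\chi_{J_{1/m}}$ followed by restriction, and its complement $(I-P_m)\iota_\mu$ sends $f\in\Mtz$ to $f\chi_{[0,1-1/m)}$, which is precisely $\iota_{\mu_m}f$ regarded in $L^1(\mu)$ (via zero-extension across $J_{1/m}$). Because $\supp\mu_m\subset[0,1-\tfrac1m]$ is compactly contained in $[0,1)$, Proposition~\ref{pr:bicompact} applies and gives that $\iota_{\mu_m}$, and therefore $\iota_\mu-P_m\iota_\mu$, is compact. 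This is the hypothesis of Lemma~\ref{le:abstractessential} that needs checking (the lemma is really asserting compactness of $T-P_mT$, which is what its proof uses).

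For the other piece, observe that for every $f\in \Mtz$,
\[
\|P_m\iota_\mu f\|_{L^1(E_m,\mu)}=\int_{J_{1/m}}|f|\,d\mu=\int_{[0,1]}|f|\,d\mu'_m=\|\iota_{\mu'_m}f\|_{L^1(\mu'_m)},
\]
and hence $\|P_m\iota_\mu\|=\|\iota_{\mu'_m}\|$. Plugging this equality into the conclusion of Lemma~\ref{le:abstractessential} produces~\eqref{eq:essentialnorm}.

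I do not anticipate a real obstacle: the proof is a translation exercise between the abstract lemma and the concrete setting, and both hypotheses (triviality of $\bigcap_m E_m$ in $\mu$-measure, and compactness of the complementary part) are already in hand from Proposition~\ref{pr:bicompact} and the assumption $\mu(\{1\})=0$. The only point that deserves care is the isometric identification between $P_m\iota_\mu$ and $\iota_{\mu'_m}$, which makes the resulting operator norms coincide and lets us pass from the lemma's $\lim_m\|P_mT\|$ to $\lim_m\|\iota_{\mu'_m}\|$.
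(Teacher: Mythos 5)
Your proposal is correct and follows exactly the paper's route: both apply Lemma~\ref{le:abstractessential} with $X=\Mtz$, $(E,\nu)=([0,1],\mu)$, $T=\iota_\mu$, $E_m=J_{1/m}$, and obtain the required compactness of $(I-P_m)\iota_\mu=\iota_{\mu_m}$ from Proposition~\ref{pr:bicompact}. You simply spell out more carefully the identifications ($\mu(\bigcap_m E_m)=\mu(\{1\})=0$ and $\|P_m\iota_\mu\|=\|\iota_{\mu'_m}\|$) that the paper leaves implicit.
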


\begin{proof} 
We may apply Lemma~\ref{le:abstractessential} to the case $X=\Mtz$, $(E,\nu)=([0,1], \mu)$, $T=\iota_\mu$, and $E_m=J_{\frac{1}{m}}$. The compactness condition on \tcr{$(I-P_m)$} follows from Proposition~\ref{pr:bicompact}. 
\end{proof}

The \tcr{formula} given in Theorem~\ref{th:essentialnorm} can be made explicit in particular cases. We state an example that will be used in Section~\ref{se:compo}.

\begin{corollary}\label{co:essential-abs-cont-limit}
Let $\Mtz$ be a M\"untz space, and suppose there exists $\delta>0$ such that \tcr{$d\mu_{| J_\delta}= h \,dm_{|J_\delta}$} for some bounded measurable function $h$ with $\lim_{t\to 1} h(t)=a$. Then $\iota_\mu$ is bounded and $\|\iota_\mu\|_e= a$.
\end{corollary}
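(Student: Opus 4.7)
The plan is first to establish boundedness directly, then apply Theorem~\ref{th:essentialnorm} to reduce the essential norm to a limit that can be sandwiched between $a-\eps$ and $a+\eps$. For boundedness, decompose $\mu=\mu_1+\mu_2$, where $\mu_1=\mu|_{[0,1-\delta]}$ is supported away from $1$ and $\mu_2=h\,dm|_{J_\delta}$ is absolutely continuous with bounded density. Corollary~\ref{co:compactsupport}(i) applies to $\mu_1$ and Corollary~\ref{co:compactsupport}(ii) applies to $\mu_2$, so $\mu$ is $\Lambda$-embedding; note in passing that $\mu(\{1\})=0$ since $\mu_2$ is absolutely continuous.

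By Theorem~\ref{th:essentialnorm}, $\|\iota_\mu\|_e=\lim_{n\to\infty}\|\iota_{\mu'_n}\|$, and once $1/n<\delta$ the measure $\mu'_n$ equals $h\,dm|_{J_{1/n}}$. Fix $\eps>0$ and pick $\eta_0\in(0,\delta)$ with $|h(t)-a|\le\eps$ on $J_{\eta_0}$. For the upper bound, whenever $1/n\le\eta_0$ and $f\in\Mtz$,
\[
\|f\|_{L^1(\mu'_n)}=\int_{J_{1/n}}|f|\,h\,dm\le(a+\eps)\int_{J_{1/n}}|f|\,dm\le(a+\eps)\|f\|_1,
\]
so $\limsup_n\|\iota_{\mu'_n}\|\le a+\eps$.

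For the lower bound I would use the normalized monomials $g_k(x)=(\lambda_k+1)x^{\lambda_k}\in\Mtz$, which satisfy $\|g_k\|_1=1$ and concentrate at $1$ as $k\to\infty$: for any fixed $\eta\in(0,1)$,
\[
\int_{J_\eta}g_k\,dm=1-(1-\eta)^{\lambda_k+1}\xrightarrow[k\to\infty]{}1,
\]
since $\lambda_k\to\infty$. For each $n$ with $1/n\le\eta_0$, choose $k=k_n$ large enough that $(1-1/n)^{\lambda_{k_n}+1}\le\eps$; then, using $h\ge a-\eps$ on $J_{1/n}$,
\[
\|g_{k_n}\|_{L^1(\mu'_n)}\ge(a-\eps)\int_{J_{1/n}}g_{k_n}\,dm\ge(a-\eps)(1-\eps),
\]
which gives $\|\iota_{\mu'_n}\|\ge(a-\eps)(1-\eps)$. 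Letting $n\to\infty$ and then $\eps\to0$ yields $\lim_n\|\iota_{\mu'_n}\|=a$, so $\|\iota_\mu\|_e=a$.

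The only real obstacle is the lower bound: one needs test functions in $\Mtz$ whose mass concentrates arbitrarily close to $1$ in order to extract the value of $h$ there, and the high-degree monomials $x^{\lambda_k}$ (normalized to have $L^1$-norm $1$) serve exactly this purpose.
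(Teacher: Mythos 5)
Your proposal is correct and follows essentially the same route as the paper: boundedness from Corollary~\ref{co:compactsupport} (part (ii) alone already covers the whole measure, so the decomposition $\mu=\mu_1+\mu_2$ is harmless but unnecessary), then Theorem~\ref{th:essentialnorm} combined with the two-sided bound $a-\eps\le h\le a+\eps$ near $1$ and the normalized monomials $g_k(x)=(\lambda_k+1)x^{\lambda_k}$ as concentrating test functions for the lower bound. The only cosmetic difference is that you choose $k_n$ depending on $n$ before passing to the limit, whereas the paper fixes $m$ and lets $k\to\infty$ first; both orderings give $\lim_n\|\iota_{\mu'_n}\|=a$.
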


\begin{proof} The boundedness of $\iota_\mu$ is a consequence of Corollary~\ref{co:compactsupport}~(ii). To obtain the essential norm, according to Theorem~\ref{th:essentialnorm}, we have to prove that $\lim_{m\to\infty}  \|\iota_{\mu'_m}\|=a$. First, for any $\eps>0$, if $m$ is sufficiently large, then $d\mu'_m\le (a+\eps) dm$ on $J_\frac{1}{m}$, which implies $\|\iota_{\mu'_m}\|\le a+\eps$; therefore
\[
 \lim_{m\to\infty}  \|\iota_{\mu'_m}\|\le a.
\]

On the other side, fix again $\eps>0$. If $m$ is sufficiently large, then $d\mu'_m\ge (a-\eps) dm$ on $J_\frac{1}{m}$. Take  the function $g_n(x)=(\lambda_n+1)x^{\lambda_n}$. We have $g_n\in\Mtz$, $\|g_n\|_\Mtz=1$, while
\[
 \|\iota_{\mu'_m} g_n \| 
=\int_{J_\frac{1}{m}}g_n\, d\mu 
\ge (a-\eps)\int_{1-\frac{1}{m}}^1 g_n(x)\, dx
=(a-\eps)\Big[1-\big(1-\frac{1}{m}\big)^{\lambda_n+1}\Big],
\]
and the last quantity tends to $a-\eps$ for $n\to\infty$. Therefore $ \|\iota_{\mu'_m}  \|\ge a-\eps$ for all $\eps>0$. Letting now $\eps\to 0$ yields the desired reverse inequality.
\end{proof}

\section{Sublinear measures}

We start with the following simple observation.

\begin{lemma}\label{le:the function}
If $\mu$ is $\Lambda$-embedding, then for any $n$ we have $\mu(J_{\frac{1}{\lambda_n}})\le C \frac{1}{\lambda_n}$.
In particular, $\liminf_{\eps\to 0}\frac{\mu(J_\eps)}{\eps}<\infty$. 
\end{lemma}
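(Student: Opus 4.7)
The plan is to apply the embedding inequality \eqref{eq:def-plongement} directly to the test function $f(x) = x^{\lambda_n}$, which lies in $M_\Lambda^1$ by definition. The two ingredients are a lower bound on this monomial on the small interval $J_{1/\lambda_n}$ and the exact value of its $L^1(m)$ norm.

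First I would compute $\|x^{\lambda_n}\|_1 = \frac{1}{\lambda_n + 1}$. Next, on the interval $J_{1/\lambda_n} = [1-\frac{1}{\lambda_n},1]$ one has $x^{\lambda_n} \ge (1-\tfrac{1}{\lambda_n})^{\lambda_n}$. Since $\sum 1/\lambda_n < \infty$ forces $\lambda_n \to \infty$, the quantity $(1-\tfrac{1}{\lambda_n})^{\lambda_n}$ converges to $e^{-1}$, and in particular is bounded below by some absolute constant $\gamma > 0$ for all $n$ sufficiently large (say $n \ge n_0$). Then restricting the integral to $J_{1/\lambda_n}$,
\[
\gamma\,\mu\!\left(J_{1/\lambda_n}\right) \le \int_{J_{1/\lambda_n}} x^{\lambda_n}\,d\mu \le \int_{[0,1]} x^{\lambda_n}\,d\mu \le \frac{C}{\lambda_n+1},
\]
the last step being \eqref{eq:def-plongement}. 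Dividing by $\gamma$ yields $\mu(J_{1/\lambda_n}) \le C'/\lambda_n$ for $n \ge n_0$, with $C' = C/\gamma$; the finitely many indices $n < n_0$ can be absorbed by enlarging the constant, since $\mu$ is a finite measure and $\lambda_n > 0$.

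For the second assertion, since $\lambda_n \to \infty$ we have $\eps_n := 1/\lambda_n \to 0$, and along this particular sequence $\mu(J_{\eps_n})/\eps_n \le C'$, which forces $\liminf_{\eps \to 0}\mu(J_\eps)/\eps \le C' < \infty$.

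I do not expect any real obstacle here: the argument is a one-line test-function computation, and the only subtlety is checking that the factor $(1-1/\lambda_n)^{\lambda_n}$ does not degenerate, which is handled by the fact that $\lambda_n \to \infty$.
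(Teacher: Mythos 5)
Your proof is correct and follows essentially the same route as the paper: test against $x^{\lambda_n}$, use $(1-\tfrac{1}{\lambda_n})^{\lambda_n}\to e^{-1}$ to get a uniform lower bound (the paper uses $\tfrac13$) on $J_{1/\lambda_n}$ for large $n$, apply the embedding inequality, and absorb the finitely many small indices into the constant. The treatment of the $\liminf$ statement is also the intended one.
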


\begin{proof}
Since
$\lim_{n\to\infty}\left(1-\frac{1}{\lambda_n}\right)^{\lambda_n}=\frac{1}{e}$,
there exists a positive integer $N$ such that, for all $n\geq N$ and for
all $x\in \left[1-\frac{1}{\lambda_n},1\right]$, we have
$x^{\lambda_n}\geq \frac{1}{3}$. It follows that for all $n\geq N$ 
\[\frac{1}{3}\mu(J_{1/\lambda_n})\leq
\int_{J_{1/\lambda_n}}x^{\lambda_n}d\mu\leq \int_0^1
x^{\lambda_n}d\mu \leq \|\iota_\mu\|\int_0^1
x^{\lambda_n}dx=\frac{\|\iota_\mu\|}{\lambda_n +1}.\]
Therefore, for all $n\geq N$, we have    
\[\mu(J_{1/\lambda_n})\leq\frac{3\|\iota_\mu\|}{\lambda_n}.\]
It is now obvious that there exists $C>0$ such that 
$\mu(J_{1/\lambda_n})\leq\frac{C}{\lambda_n}$ for all $n$.  
\end{proof}

This suggests the following definition.

\begin{Def}
A measure $\mu$ is called \emph{sublinear} 
if there is a constant $C>0$ such that for any $0<\eps<1$ we have
$\mu(J_\eps)\le C \eps$; the smallest such $C$ will be denoted by
$\|\mu\|_S$. The measure $\mu$ is called
 \emph{vanishing sublinear} if $\lim_{\eps\to 0}\frac{\mu(J_\eps)}{\eps}=0$.

\end{Def}

We can then obtain a necessary condition for $\Lambda$-embedding for a class of sequences $\Lambda$. 

\begin{proposition}\label{pr:sublinear} Suppose that there exists $M>0$ such that $\frac{\lambda_{n+1}}{\lambda_n}\le M$. Then any $\Lambda$-embedding measure is sublinear.
\end{proposition}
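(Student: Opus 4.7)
The plan is to leverage Lemma~\ref{le:the function}, which already gives control of $\mu(J_\eps)$ at the discrete scales $\eps=1/\lambda_n$, and then use the bounded-ratio hypothesis $\lambda_{n+1}/\lambda_n\le M$ to interpolate between consecutive scales and obtain the sublinear bound for all $\eps\in(0,1)$.

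First I would record what Lemma~\ref{le:the function} gives: there is a constant $C>0$ (depending on $\mu$ and $\Lambda$) such that $\mu(J_{1/\lambda_n})\le C/\lambda_n$ for every $n\ge 1$. Now, given an arbitrary $\eps\in(0,1/\lambda_1)$, I would choose the unique integer $n\ge 1$ for which
\[
\frac{1}{\lambda_{n+1}}\le \eps<\frac{1}{\lambda_n}.
\]
Such an $n$ exists because $1/\lambda_n\to 0$ and $1/\lambda_1>\eps$. Since $\eps<1/\lambda_n$, the interval $J_\eps=[1-\eps,1]$ is contained in $J_{1/\lambda_n}=[1-1/\lambda_n,1]$, hence by monotonicity of $\mu$ and Lemma~\ref{le:the function},
\[
\mu(J_\eps)\le \mu(J_{1/\lambda_n})\le \frac{C}{\lambda_n}=\frac{C\lambda_{n+1}}{\lambda_n}\cdot\frac{1}{\lambda_{n+1}}\le CM\cdot\frac{1}{\lambda_{n+1}}\le CM\,\eps,
\]
using the hypothesis $\lambda_{n+1}/\lambda_n\le M$ and the lower bound $1/\lambda_{n+1}\le \eps$ in the last step.

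It then remains to handle the regime $\eps\in[1/\lambda_1,1)$, which is trivial: there $\mu(J_\eps)\le\mu([0,1])\le \mu([0,1])\,\lambda_1\,\eps$, so we may absorb this contribution by enlarging the constant. Putting the two cases together gives $\mu(J_\eps)\le C'\eps$ for all $\eps\in(0,1)$, where $C'=\max(CM,\lambda_1\mu([0,1]))$, which is precisely the sublinear condition. I do not anticipate any real obstacle here; the only point to be careful about is that the constant $C$ from Lemma~\ref{le:the function} controls $\mu(J_{1/\lambda_n})$ for \emph{all} $n$ (the lemma does state this), so no separate argument is needed for small $n$ in the main estimate.
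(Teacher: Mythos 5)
Your argument is correct and is essentially identical to the paper's own proof: both invoke Lemma~\ref{le:the function} at the discrete scales $1/\lambda_n$, locate $\eps$ between consecutive scales, and use the bounded ratio $\lambda_{n+1}/\lambda_n\le M$ to interpolate. The only cosmetic difference is that you spell out the trivial case $\eps\ge 1/\lambda_1$, which the paper dismisses by noting it suffices to treat small $\eps$.
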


\begin{proof}
\tcr{Obviously it is sufficient to check the condition of sublinearity for $\eps$ sufficiently small. Since $1/\lambda_n$ is a decreasing sequence tending to $0$, we may assume that for some $n$ we have
$\frac{1}{\lambda_{n+1}}\le \eps\le \frac{1}{\lambda_n}$.} Then, applying Lemma~\ref{le:the function},
\[
\mu(J_\eps)\le \mu(J_{\frac{1}{\lambda_n}} )\le \frac{C}{\lambda_n}
\le \frac{CM}{\lambda_{n+1}}\le CM\eps. \qedhere
\]
\end{proof}

It is interesting that sublinearity is also a sufficient condition for $\Lambda$-embedding for the class of quasilacunary sequences $\Lambda$. This will be proved in the next section; now we continue with some elementary facts about sublinear measures.

The next lemma is a consequence of Lemma~\ref{le:rho}, in case $\rho(x)=x$.

\begin{lemma}\label{le:second}
Suppose that $\|\mu\|_S<\infty$. If $g$ is continuous, positive, and increasing, we have
\begin{equation}\label{eq:11}
\int_{[0,1]}g\, d\mu\le \|\mu\|_S \int_{[0,1]}g\, dm.
\end{equation}
\end{lemma}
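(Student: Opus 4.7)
The plan is to apply Lemma~\ref{le:rho} directly with the specific choice $\rho(x) = \|\mu\|_S \cdot x$. First I would verify that this $\rho$ satisfies the hypotheses of Lemma~\ref{le:rho}: it is linear, hence $C^1$ and strictly increasing on $\RR_+$, and $\rho(0)=0$. The assumption $\|\mu\|_S<\infty$ combined with the definition of $\|\mu\|_S$ gives $\mu(J_\eps)\le \|\mu\|_S\,\eps = \rho(\eps)$ for all $\eps\in(0,1]$, which is precisely the hypothesis of Lemma~\ref{le:rho}.

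Next, since $\rho'(t) = \|\mu\|_S$ is constant, the conclusion of Lemma~\ref{le:rho} applied to the continuous, positive, increasing function $g$ reads
\[
\int_{[0,1]} g \, d\mu \;\le\; \int_0^1 g(x) \, \rho'(1-x)\, dx \;=\; \|\mu\|_S \int_0^1 g(x)\, dx \;=\; \|\mu\|_S \int_{[0,1]} g\, dm,
\]
which is exactly the inequality \eqref{eq:11}.

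There is no real obstacle: the lemma is a direct specialization of Lemma~\ref{le:rho}, and the only content is identifying the correct $\rho$. The reason it deserves a separate statement is that the linear majorant $\rho(x)=\|\mu\|_S x$ corresponds precisely to the sublinearity condition just introduced, so the resulting inequality \eqref{eq:11} becomes the natural integral form of sublinearity used later in the paper.
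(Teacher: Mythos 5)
Your proof is correct and is essentially the paper's own argument: the paper likewise obtains this lemma as a direct consequence of Lemma~\ref{le:rho} with a linear choice of $\rho$ (the precise normalization $\rho(x)=\|\mu\|_S\,x$ that you use is the right one). Nothing to add.
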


\begin{corollary}\label{co:first} (i) If $\mu$ is sublinear, then 
 \[
\sup_{\lambda}\|(\lambda+1)\Pwr{}\|_{L^1(\mu)}\le \|\mu\|_S.
\]
(ii) If  $\mu$ is vanishing sublinear, then 
 \[
\lim_{\lambda\to\infty}\|\lambda\Pwr{}\|_{L^1(\mu)}=0.
\]
\end{corollary}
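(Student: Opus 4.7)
The plan is to obtain (i) as a direct consequence of Lemma~\ref{le:second}, and then to reduce (ii) to (i) by localizing to a small enough interval $J_\delta$.

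For part (i), I would apply Lemma~\ref{le:second} to the function $g(x)=(\lambda+1)x^\lambda$, which is continuous, positive, and increasing on $[0,1]$. This gives
\[
\|(\lambda+1)x^\lambda\|_{L^1(\mu)}\le \|\mu\|_S\int_0^1(\lambda+1)x^\lambda\,dx=\|\mu\|_S,
\]
and taking the supremum over $\lambda$ finishes the argument.

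For part (ii), the main idea is that if $\mu$ is vanishing sublinear, then the restriction of $\mu$ to a sufficiently small interval $J_\delta$ has arbitrarily small sublinear norm, while the contribution from $[0,1-\delta]$ is killed by the factor $\lambda(1-\delta)^\lambda\to 0$. More precisely, fix $\eta>0$ and choose $\delta\in(0,1)$ so that $\mu(J_\eps)\le\eta\eps$ for every $\eps\le\delta$. Let $\nu=\mu_{|J_\delta}$; for $\eps\le\delta$ one has $\nu(J_\eps)=\mu(J_\eps)\le\eta\eps$, while for $\eps>\delta$ one has $\nu(J_\eps)=\mu(J_\delta)\le\eta\delta\le\eta\eps$, so $\|\nu\|_S\le\eta$. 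Splitting the integral and applying (i) to $\nu$ yields
\[
\int_{[0,1]}\lambda x^\lambda\,d\mu=\int_{[0,1-\delta)}\lambda x^\lambda\,d\mu+\int_{J_\delta}\lambda x^\lambda\,d\nu\le \lambda(1-\delta)^\lambda\,\mu([0,1])+\frac{\lambda}{\lambda+1}\,\eta.
\]
Since $\lambda(1-\delta)^\lambda\to 0$ as $\lambda\to\infty$, one gets $\limsup_{\lambda\to\infty}\|\lambda x^\lambda\|_{L^1(\mu)}\le\eta$, and letting $\eta\to 0$ concludes the proof.

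The only step that requires any care is the verification that the restricted measure $\nu=\mu_{|J_\delta}$ is genuinely sublinear with $\|\nu\|_S\le\eta$, where one must check the two regimes $\eps\le\delta$ and $\eps>\delta$ separately; this is more bookkeeping than obstacle. Everything else is essentially the application of Lemma~\ref{le:second} and the elementary asymptotic $\lambda(1-\delta)^\lambda\to 0$.
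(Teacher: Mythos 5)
Your proposal is correct and follows essentially the same route as the paper: part (i) is the direct application of Lemma~\ref{le:second} to $g(x)=(\lambda+1)x^\lambda$, and part (ii) splits the integral at $1-\delta$, kills the first piece using $\lambda(1-\delta)^\lambda\to 0$, and applies the sublinear estimate to the restricted measure $\mu_{|J_\delta}$, whose sublinear norm is at most $\eta$. Your explicit two-regime verification that $\|\mu_{|J_\delta}\|_S\le\eta$ is a detail the paper leaves implicit, but the argument is identical in substance.
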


\begin{proof}
For (i), we apply  Lemma~\ref{le:second} to the case $g=\Pwr{}$.

For (ii), fix $\eps>0$. By the definition of vanishing sublinear, there is $\delta>0$ such that $\mu(J_{\delta'})\le \eps\delta'$ for all $\delta'\le\delta$. If $\lambda$ is large enough, then $\lambda x^\lambda\le \eps$ for all $x\le 1-\delta$, and we have
\[
\int_{[0,1]}\lambda\Pwr{}\,d\mu(x)= \int_{[0,1-\delta)}\lambda\Pwr{}\,d\mu(x)+  \int_{[1-\delta,1)}\lambda\Pwr{}\,d\mu(x)
\le \eps\|\mu\|+  \int_{[1-\delta,1)}\lambda\Pwr{}\,d\mu(x).
\] 
\tcr{Let $\mu_\delta$ be the measure which is equal to $\mu$ on
  $J_\delta$ and is $0$ elsewhere. Then applying again
  Lemma~\ref{le:second} to the measure 
$\mu_{\delta}$ and to the function $g(x)=\lambda\Pwr{}$, we get that} the second term is also bounded by~$\eps$.
\end{proof}

\section{Quasilacunary sequences}

\tcr{A sequence $\Lambda$ is \emph{lacunary} if for some $q>1$ we have $\lambda_{n+1}/\lambda_n\geq q$, $n\geq 1$. More generally a sequence $\Lambda$ is \emph{quasilacunary} if for some increasing
sequence $(n_k)$ of integers and some $q>1$ we have $\lambda_{n_{k+1}}/\lambda_{n_k}\ge q$ and $N:=\sup_k(n_{k+1}-n_k)<\infty$. (The first condition of 
 quasilacunarity is sometimes stated as} $\lambda_{n_k+1}/\lambda_{n_k}\ge q$; it is easy to see that the two are equivalent, of course with a different $q$.) The sequence $\Lambda$ will be fixed in this section.
We need a few results from \cite{gl05}.

\begin{lemma}[{\cite[Corollary 6.1.3]{gl05}}]\label{le:majorize_f} 
There exists a constant $d>0$ (depending only on $\Lambda$) such that,
if $f(x)=\sum_{n=1}^m \alpha_n x^{\lambda_n}$, then, for all $x\in[0,1]$,
\[
|f(x)|\le d   x^{\lambda_1} \|f\|_\infty. 
\]
\end{lemma}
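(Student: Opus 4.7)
Factor out the smallest monomial: write $f(x)=x^{\lambda_1}g(x)$, where
\[
g(x)=\alpha_1+\sum_{n=2}^{m}\alpha_n x^{\lambda_n-\lambda_1}=\alpha_1+h(x).
\]
The function $h$ is a M\"untz polynomial for the shifted, strictly positive sequence $\tilde\Lambda'=(\lambda_n-\lambda_1)_{n\ge 2}$, which still satisfies the M\"untz condition (since $\lambda_n-\lambda_1\ge\lambda_n/2$ for $n$ large enough). The claim is equivalent to $\|g\|_\infty\le d\,\|f\|_\infty$ with $d=d(\Lambda)$, i.e.\ the multiplication operator $g\mapsto x^{\lambda_1}g$ is bounded below on this class of polynomials with a constant depending only on $\Lambda$.

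I would split $[0,1]$ at $x=1/2$. On the right half the bound is immediate: $x^{\lambda_1}\ge 2^{-\lambda_1}$, hence
\[
\sup_{[1/2,1]}|g|\le 2^{\lambda_1}\sup_{[1/2,1]}|f|\le 2^{\lambda_1}\|f\|_\infty.
\]
On the left half, applying Lemma~\ref{le:BE} to $h$ (a M\"untz polynomial of the type considered in that lemma) gives
\[
\sup_{[0,1/2]}|h|\le c_{1/2}\int_{1/2}^{1}|h|\,dx\le \tfrac{c_{1/2}}{2}\bigl(\sup_{[1/2,1]}|g|+|\alpha_1|\bigr)\le \tfrac{c_{1/2}}{2}\bigl(2^{\lambda_1}\|f\|_\infty+|\alpha_1|\bigr).
\]
Since $\sup_{[0,1/2]}|g|\le|\alpha_1|+\sup_{[0,1/2]}|h|$, the whole argument comes down to bounding $|\alpha_1|$ by a multiple of $\|f\|_\infty$.

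The hard part is therefore the coefficient estimate $|\alpha_1|\le K(\Lambda)\|f\|_\infty$. This is the content of the classical Clarkson--Erd\H{o}s theorem: thanks to the full M\"untz condition $\sum 1/\lambda_n<\infty$, every element of the $C([0,1])$-closure of $\mathrm{span}(x^{\lambda_n})$ has a unique representation $\sum \alpha_n x^{\lambda_n}$ which converges on $[0,1)$ and extends analytically across~$0$, and each coefficient functional $f\mapsto\alpha_n$ is continuous with norm depending only on $\Lambda$. This is the only non-elementary ingredient, and it is precisely where the full M\"untz condition enters. Once it is granted, combining the three inequalities gives $\|g\|_\infty\le d\,\|f\|_\infty$ with $d=\max\bigl(K+\tfrac{c_{1/2}}{2}(2^{\lambda_1}+K),\ 2^{\lambda_1}\bigr)$, depending only on $\Lambda$, as asserted.
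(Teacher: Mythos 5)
The paper gives no proof of this lemma: it is quoted verbatim from \cite[Corollary 6.1.3]{gl05}, so there is no internal argument to compare against. Your reconstruction is correct, and the reduction is the right one: writing $f=x^{\lambda_1}g$ and bounding $\|g\|_\infty$ by treating $[1/2,1]$ trivially and $[0,1/2]$ via a Nikolskii-type inequality. Three remarks. First, when you apply Lemma~\ref{le:BE} to $h$ you are invoking it for the shifted system $(\lambda_n-\lambda_1)_{n\ge 2}$ rather than for $\Lambda$ itself; the Borwein--Erd\'elyi inequality does hold there (it is again a M\"untz system with positive exponents and convergent reciprocal sum, as you note), but the constant is the one attached to that system, so ``$c_{1/2}$'' is a mild abuse of notation --- harmless, since it still depends only on $\Lambda$. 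Second, the coefficient bound $|\alpha_1|\le K(\Lambda)\|f\|_\infty$, which you outsource to Clarkson--Erd\H{o}s, can be made self-contained using machinery the paper itself deploys in Section~7: by \eqref{eq:gram}, $|\alpha_1|\le d^{-1}\|f\|_2\le d^{-1}\|f\|_\infty$, where $d$ is the $L^2$ distance from $x^{\lambda_1}$ to the span of the remaining monomials, and the Cauchy-determinant formula quoted there shows $d>0$ exactly because $\sum_n 1/\lambda_n<\infty$. Third, the coefficient estimate can in fact be bypassed entirely: inequality \eqref{eq:BE} is also valid for M\"untz systems containing the constant function (the paper remarks that adjoining the constants changes nothing), so applying it directly to $g=\alpha_1+h$ gives
\[
\sup_{[0,1/2]}|g|\le c\int_{1/2}^{1}|g(x)|\,dx\le c\,2^{\lambda_1}\int_{1/2}^{1}x^{\lambda_1}|g(x)|\,dx= c\,2^{\lambda_1}\int_{1/2}^{1}|f(x)|\,dx\le c\,2^{\lambda_1}\|f\|_\infty,
\]
which, combined with your bound on $[1/2,1]$, finishes the proof in two lines. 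Your version is longer but isolates cleanly where the full M\"untz condition enters, which has its own expository value.
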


\begin{lemma}[{\cite[Proposition~8.2.2]{gl05}}]\label{le:bernstein} 
There is a constant $K>0$ (depending only on $\Lambda$) such that, if $f(x)=\sum_{n=1}^m \alpha_n x^{\lambda_n}$, then
\[
\|f'\|_\infty\le K \left(\sum_{n=1}^m \lambda_n \right)\|f\|_\infty.
\]
\end{lemma}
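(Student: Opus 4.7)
The plan is to deduce the inequality from Newman's classical Bernstein-type bound for exponential sums via the substitution $x=e^{-t}$. Setting $g(t):=f(e^{-t})=\sum_{n=1}^m\alpha_n e^{-\lambda_n t}$ on $[0,\infty)$, one checks immediately that $\|g\|_{L^\infty(\RR_+)}=\|f\|_\infty$ and $g'(t)=-e^{-t}f'(e^{-t})$. Newman's inequality, $\|g'\|_{L^\infty(\RR_+)}\le C_0(\sum_n\lambda_n)\|g\|_{L^\infty(\RR_+)}$ for an absolute constant $C_0$, then translates to the pointwise bound
\[
x\,|f'(x)|\le C_0\Bigl(\sum_{n=1}^m\lambda_n\Bigr)\|f\|_\infty,\qquad x\in(0,1],
\]
which already controls $\|f'\|_\infty$ on any interval $[\delta,1]$ with $\delta>0$ (e.g.\ on $[\tfrac12,1]$, at the cost of a factor~$2$).

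What remains is to control $f'$ on $[0,\tfrac12]$, and it is here that the quasilacunary hypothesis on $\Lambda$ enters. Under this hypothesis, the system $\{x^{\lambda_n}\}$ is a Schauder basis of the $L^\infty$-closure of its span with basis constant depending only on the quasilacunarity parameters $q$ and $N$; equivalently, the coefficient functionals $f\mapsto\alpha_n$ are uniformly bounded, $|\alpha_n|\le C_\Lambda\|f\|_\infty$ for every $n$. Combined with termwise differentiation and the elementary estimate $x^{\lambda_n-1}\le 2$ on $[0,\tfrac12]$ (valid whenever $\lambda_n\ge 1$, with the finitely many smaller $\lambda_n$ absorbed into the $\Lambda$-dependent constant), this gives
\[
\sup_{x\in[0,1/2]}|f'(x)|\le 2\sum_{n=1}^m|\alpha_n|\lambda_n\le 2C_\Lambda\|f\|_\infty\sum_{n=1}^m\lambda_n,
\]
and together with the estimate on $[\tfrac12,1]$ one gets the claim with $K=\max(2C_0,2C_\Lambda)$.

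The main obstacle is establishing the uniform bound $|\alpha_n|\le C_\Lambda\|f\|_\infty$ on the coefficient functionals. The standard argument exploits the gaps in $\Lambda$: one partitions the indices into blocks $B_k=\{n_k,\ldots,n_{k+1}-1\}$ of cardinality at most $N$, applies Lemma~\ref{le:majorize_f} to the tail $\sum_{n\ge n_k}\alpha_n x^{\lambda_n}$ to show that at a suitably chosen scale $x_k\asymp 1-1/\lambda_{n_k}$ this tail is dominated by the $k$-th block, and then recovers the individual $\alpha_n$ within each block by inverting a generalized Vandermonde matrix of size at most~$N$, whose condition number is bounded in terms of $q$ and $N$ alone. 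Once this Schauder-basis constant is in hand, the rest of the proof is the routine combination above.
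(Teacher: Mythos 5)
Your first step is fine: Newman's inequality does give $\sup_{x\in(0,1]}x\,|f'(x)|\le C_0\bigl(\sum_{n=1}^m\lambda_n\bigr)\|f\|_\infty$, and this is the standard starting point (the paper itself offers no proof, quoting the result from \cite[Proposition~8.2.2]{gl05}). The gap is in the second half. The uniform bound $|\alpha_n|\le C_\Lambda\|f\|_\infty$ on the coefficient functionals is false for a general quasilacunary $\Lambda$, and a fortiori so is the claim that the relevant constants depend only on $q$ and $N$. Quasilacunarity controls the ratios $\lambda_{n_{k+1}}/\lambda_{n_k}$ between blocks but says nothing about the separation of exponents \emph{inside} a block: take $\Lambda=\{2,2+\epsilon_1,4,4+\epsilon_2,8,8+\epsilon_3,\dots\}$ with $\epsilon_k\to0$ rapidly (so $N=2$ and $q$ is essentially $2$). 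Since $\|x^{\mu}-x^{\mu+\epsilon}\|_\infty\asymp \epsilon/(e\mu)$, the function $f=\tfrac{e\mu}{\epsilon}\bigl(x^{\mu}-x^{\mu+\epsilon}\bigr)$ has $\|f\|_\infty\asymp1$ but coefficients of size $e\mu/\epsilon$; the coordinate functionals are unbounded, $\{x^{\lambda_n}\}$ is not a Schauder basis of the closure of its span, and your generalized Vandermonde matrices are arbitrarily ill-conditioned. (The Bernstein inequality itself survives this example --- it is a statement about $f$ as a whole, not about its individual coefficients --- which is exactly why a route through coefficient functionals cannot work. Note also that the lemma should not require quasilacunarity at all.)

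The weight $x$ can be removed with a tool already in the paper. The function $g(x):=xf'(x)=\sum_{n=1}^m\alpha_n\lambda_n x^{\lambda_n}$ is again a polynomial with exponents in $\Lambda$, so Lemma~\ref{le:majorize_f} gives $|g(x)|\le d\,x^{\lambda_1}\|g\|_\infty$, whence for $\lambda_1\ge1$ and all $x\in(0,1]$
\[
|f'(x)|=\frac{|g(x)|}{x}\le d\,x^{\lambda_1-1}\|g\|_\infty\le d\,\|g\|_\infty\le dC_0\Bigl(\sum_{n=1}^m\lambda_n\Bigr)\|f\|_\infty,
\]
and the bound extends to $x=0$ by continuity of $f'$ on $[0,1]$. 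This proves the lemma with $K=dC_0$ and no hypothesis on $\Lambda$ beyond $\lambda_1\ge1$ and $\sum_n 1/\lambda_n<\infty$. (Some normalization such as $\lambda_1\ge1$ is unavoidable: for $\lambda_1<1$ the statement is false as written, since $\|(x^{\lambda_1})'\|_\infty=\infty$; this is the same reduction the paper makes in the proof of Lemma~\ref{le:ihab}.)
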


\begin{lemma}[{\cite[Theorem~9.3.3]{gl05}}]\label{le:quasil1}
If $\Lambda$ is quasilacunary, $F_k=\Span\{x^{\lambda_{n_k+1}},\ldots, x^{\lambda_{n_{k+1}}}\}$, then there is $d_1>0$ such that for any sequence of functions $f_k\in F_k$ we have
\[
d_1 \sum_k \|f_k\|_1 \le \|\sum_k f_k\|_1 \le  \sum_k \|f_k\|_1.
\]
\end{lemma}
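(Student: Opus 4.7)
The right-hand inequality $\|\sum_k f_k\|_1\le\sum_k\|f_k\|_1$ is just the triangle inequality, so the substance lies in the left-hand inequality, which asserts that the blocks $(F_k)$ form an \emph{absolutely convergent} basic sequence in $L^1$. My plan would be to exploit quasilacunarity to attach to each block a characteristic scale near~$1$ and then show that, in $L^1$, each $f_k$ is essentially concentrated on its own scale while being small on the others, with geometric decay.

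Concretely, pick a small constant $c>0$ and set
\[
I_k:=\bigl[1-c/\lambda_{n_k},\,1-c/\lambda_{n_{k+1}}\bigr),\qquad k\ge1,
\]
together with a leftmost piece $I_0:=[0,1-c/\lambda_{n_1})$; by quasilacunarity the $I_k$ are pairwise disjoint and their union is $[0,1)$. The two key estimates I would aim to prove are:
\begin{enumerate}
\item[(a)] \emph{Mass retention}: for every $f_k\in F_k$, $\|f_k\|_{L^1(I_k)}\ge c_1\|f_k\|_1$;
\item[(b)] \emph{Off-scale decay}: for every $f_k\in F_k$ and every $j\ne k$, $\|f_k\|_{L^1(I_j)}\le c_2 q^{-\alpha|j-k|}\|f_k\|_1$ for some $\alpha>0$.
\end{enumerate}
For (a) the idea is to use $\dim F_k\le N$ to get a norm equivalence on $F_k$, then rule out concentration off $I_k$ by combining Lemma~\ref{le:majorize_f} (which forces $f_k$ to be small near $0$ on the scale of $x^{\lambda_{n_k+1}}$) with the fact that $x^{\lambda_{n_{k+1}}}$ is uniformly bounded below on $I_k$. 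For (b) the geometric factor comes from the lacunary ratio: for $j>k$, every monomial $x^{\lambda_\ell}$ appearing in $f_k$ (so $\ell\le n_{k+1}$) satisfies $x^{\lambda_\ell}\lesssim q^{-(j-k)}$ pointwise on $I_j$ (because we are well past the natural decay scale $1-1/\lambda_\ell$), and for $j<k$ the reverse bound follows from Lemma~\ref{le:bernstein}, which restricts how fast $f_k$ can grow as $x\to 1$ across $I_j$.

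Given (a) and (b), the triangle inequality on each interval $I_k$ yields
\[
\Bigl\|\sum_j f_j\Bigr\|_{L^1(I_k)}
\ge c_1\|f_k\|_1-c_2\sum_{j\ne k}q^{-\alpha|j-k|}\|f_j\|_1,
\]
and summing over $k$ together with $\sum_{d\ge1}q^{-\alpha d}<\infty$ would give a bound of the shape
\[
\Bigl\|\sum_j f_j\Bigr\|_1\ge \bigl(c_1-c_2 C_q\bigr)\sum_k\|f_k\|_1,
\]
which is the desired inequality provided the parenthesis is positive.

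The main obstacle is precisely that last positivity: with the $q$ that quasilacunarity supplies us, it is not automatic that $c_2 C_q<c_1$. The standard trick I would use to circumvent this is a \emph{coarsening} argument: group the blocks into super-blocks of $M$ consecutive $F_k$'s, so that the effective lacunary ratio becomes $q^M$ and the corresponding decay constant $C_{q^M}=\sum_{d\ge1}q^{-\alpha Md}$ can be made arbitrarily small by choosing $M$ large. One proves the inequality at the super-block level, and then recovers the statement for the original blocks by noting that within each super-block one has at most $MN$ monomials, so a finite-dimensional norm equivalence (again via Lemmas~\ref{le:majorize_f} and~\ref{le:bernstein}) converts the super-block estimate back into one for the individual $\|f_k\|_1$, at the cost of a constant depending only on $\Lambda$.
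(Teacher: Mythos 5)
First, a point of comparison: the paper does not prove this lemma at all --- it is imported verbatim from \cite[Theorem~9.3.3]{gl05} --- so there is no internal proof to measure yours against. Judged on its own terms, your sketch has the right global shape (localize each block to its own scale near $1$, prove off-diagonal decay, sum over disjoint intervals), but the key mechanisms are misassigned and two steps do not close. The most concrete error is in (b) for $j>k$: the points of $I_j=[1-c/\lambda_{n_j},1-c/\lambda_{n_{j+1}})$ are \emph{closer} to $1$ than the natural scale of $f_k$, so for $\ell\le n_{k+1}$ one has $x^{\lambda_\ell}\ge(1-c/\lambda_{n_j})^{\lambda_\ell}\approx e^{-c\lambda_\ell/\lambda_{n_j}}$, which is close to $1$, not $\lesssim q^{-(j-k)}$; the pointwise bound you assert is false. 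The correct reason $\|f_k\|_{L^1(I_j)}$ is small for $j>k$ is that $|I_j|\le c/\lambda_{n_j}$ while $\|f_k\|_\infty\lesssim\lambda_{n_{k+1}}\|f_k\|_1$ (which is what Lemmas~\ref{le:elementary} and~\ref{le:bernstein} yield, exactly as in the proof of Theorem~\ref{th:Sarquasilac2}); symmetrically, for $j<k$ the decay comes from Lemma~\ref{le:majorize_f} and the super-exponential smallness of $x^{\lambda_{n_k+1}}$ to the left of $I_k$, not from the Bernstein inequality. You have attached the two mechanisms to the wrong sides.

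Even after this repair, two gaps remain. In (a), the left-tail estimate (of the form $q^{2(N-1)}e^{-c\lambda_{n_k+1}/\lambda_{n_k}}\|f_k\|_1$) forces the cut-off constant $c$ to be \emph{large}, while the right-tail estimate ($\|f_k\|_{L^1([1-c/\lambda_{n_{k+1}},1])}\lesssim c\,\|f_k\|_1$) forces $c$ to be \emph{small}; with a single constant these requirements can be incompatible, so you need two different constants, after which the $I_k$ overlap and the summation step must be adjusted (bounded overlap saves it, but this has to be said). More seriously, the final coarsening step is circular as written: recovering $\sum_{k\in B}\|f_k\|_1\le C\,\|\sum_{k\in B}f_k\|_1$ with $C$ \emph{uniform over all super-blocks} $B$ is not a consequence of ``finite-dimensional norm equivalence'' --- equivalence constants on an $MN$-dimensional space depend on the space, and uniformity in $B$ is precisely the content of the lemma for finitely many consecutive blocks. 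That inner estimate is where the real work in \cite{gl05} lies, and your sketch does not supply it.
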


\tcr{In particular, if $\Lambda$ is lacunary, then there is $d_1>0$ such that 
\begin{equation}\label{eq:lacunary}
d_1\sum_n \frac{|a_n|}{\lambda_n}\leq \|p\|_1\leq \sum_n \frac{|a_n|}{\lambda_n},
\end{equation}
for every polynomial $p(x)=\sum_n a_n x^{\lambda_n}$ in $M_\Lambda^1$.}

Let us also note that, in the proof of \cite[Theorem 9.3.3]{gl05}, it is shown that any quasilacunary sequence may be enlarged to one that is still quasilacunary and satisfies $\lambda_{n+1}/\lambda_n\le q^2$ for all $n$. We will suppose this true in the sequel; it follows then that we can assume
\begin{equation}\label{eq:ratio_lambdas}
q\le \frac{\lambda_{n_{k+1}}}{\lambda_{n_k+1}}\le q^{2(N-1)}.
\end{equation}

Finally, we will use also the following elementary lemma.

\begin{lemma}\label{le:elementary}
If $f:[0,1]\to \RR$ is a nonconstant differentiable function,
  then
\[
\|f\|_1\ge \min\left\{\frac{\|f\|_\infty^2}{2\|f'\|_\infty},
\frac{\|f\|_\infty}{4}\right\}.
\]
\end{lemma}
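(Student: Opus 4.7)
The plan is to localize around a point where $|f|$ attains its maximum and exploit the Lipschitz bound furnished by $\|f'\|_\infty$. Set $M:=\|f\|_\infty$ and $L:=\|f'\|_\infty$. Since $f$ is nonconstant and differentiable, $L>0$; if $L=+\infty$ the claimed lower bound is $\min\{0,M/4\}=0$ and the inequality is trivial, so I may assume $L<\infty$. By continuity of $f$ on the compact interval $[0,1]$, there is some $x_0\in[0,1]$ with $|f(x_0)|=M$; replacing $f$ by $-f$ if needed, take $f(x_0)=M$. The mean value theorem then gives the Lipschitz estimate
\[
f(x)\ge M-L|x-x_0|\qquad\text{for every }x\in[0,1].
\]

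Next I would choose $\ell:=\min\{M/L,\,1/2\}$. Because $\ell\le 1/2$, at least one of the one-sided intervals $[x_0-\ell,x_0]$ and $[x_0,x_0+\ell]$ lies entirely inside $[0,1]$; call this interval $I$. On $I$ the right-hand side $M-L|x-x_0|$ is non-negative, so I estimate
\[
\|f\|_1\ge\int_I f\,dx\ge\int_0^\ell (M-Lt)\,dt=M\ell-\tfrac{L}{2}\ell^2.
\]
The proof then finishes by splitting on the two cases defining $\ell$. If $\ell=M/L$ (equivalently $L\ge 2M$) the last expression equals exactly $M^2/(2L)$. If $\ell=1/2$ (equivalently $L\le 2M$) it equals $M/2-L/8$, which is at least $M/2-M/4=M/4$ using $L\le 2M$. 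In either case, $\|f\|_1\ge\min\{M^2/(2L),\,M/4\}$.

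The only real subtlety is the position of $x_0$ relative to the endpoints of $[0,1]$. A symmetric interval of radius $M/L$ around $x_0$ would yield the cleaner bound $M^2/L$ when it fits, but if $x_0$ sits near $0$ or $1$ such an interval sticks outside $[0,1]$ and one loses a factor of two. Using a one-sided adjacent interval removes this boundary issue uniformly, at the cost of the $1/2$ in $M^2/(2L)$; capping the length at $1/2$ is what produces the secondary bound $M/4$ in the regime where $L$ is too small for the linear majorant to vanish before we reach the boundary.
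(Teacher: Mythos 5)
Your proof is correct and follows essentially the same route as the paper's: both localize at a point where $|f|$ attains its maximum, pass to a one-sided interval to avoid boundary issues, use the Lipschitz minorant coming from $\|f'\|_\infty$, and split into the cases $\|f'\|_\infty\le 2M$ and $\|f'\|_\infty>2M$. Your unified choice $\ell=\min\{M/L,1/2\}$ is a slightly tidier packaging of the same computation.
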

\begin{proof}
Let $x_0\in [0,1]$ such that $|f(x_0)|=M$, where $M:=\|f\|_\infty>0$.
Replacing $f$ by $-f$ we may assume that $f(x_0)=M$. Obviously 
one of the intervals $[x_0-1/2,x_0]$ or  $[x_0,x_0+1/2]$ is in $[0,1]$.
 Suppose that the first interval is in $[0,1]$. \\
If $\frac{\|f\|_\infty}{4}\leq \frac{\|f\|_\infty^2}{2\|f'\|_\infty}$, i.e.
 if $\|f'\|_\infty\leq 2M$, for all $x\in [x_0-1/2,x_0]$,
\begin{equation*}\label{eq:triangle}
f(x)\geq 2M (x- x_0)+M\geq 0.
\end{equation*}
It follows that $\|f \|_1\geq \int_{x_0-1/2}^{x_0}f(x)dx\geq \frac{M}{4}$.\\
If $\|f'\|_\infty >2M$ then $x_0-\frac{M}{\|f'\|_\infty}\in  [x_0-1/2,x_0]$
 and  for all $x\in [x_0-M/\|f'\|_\infty,x_0]$,
\begin{equation*}\label{eq:triangle2} 
f(x)\geq \|f'\|_\infty (x- x_0)+M\geq 0.
\end{equation*}
It follows that $\|f \|_1\geq \int_{x_0-M/\|f'\|_\infty}^{x_0}f(x)dx\geq
\frac{M^2}{2\|f'\|_\infty}$.\\
The proof in the case where $[x_0,x_0+1/2]$ is in $[0,1]$ follows along
 the same lines.
\end{proof}

We are ready now for the promised extension.

\begin{theorem}\label{th:Sarquasilac2}
If $\Lambda$ is quasilacunary, then any sublinear measure $\mu$ is $\Lambda$-embedding.
\end{theorem}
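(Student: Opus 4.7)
The plan is to reduce matters to a block-wise embedding estimate via the quasilacunary decomposition of Lemma~\ref{le:quasil1}. Writing any polynomial $p\in M_\Lambda^1$ as $p=\sum_k f_k$ with $f_k\in F_k$, the triangle inequality and the lower equivalence in Lemma~\ref{le:quasil1} give
$$\|p\|_{L^1(\mu)}\le \sum_k \|f_k\|_{L^1(\mu)}\qquad\text{and}\qquad \sum_k \|f_k\|_1\le d_1^{-1}\|p\|_1,$$
so it suffices to produce a constant $C$, independent of $k$, such that $\|f_k\|_{L^1(\mu)}\le C\|f_k\|_1$ for every block $f_k\in F_k$. Each block has at most $N$ exponents concentrated in a geometric window of ratio $q^{2(N-1)}$ by~\eqref{eq:ratio_lambdas}, which is what makes a uniform bound plausible.

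For the upper bound on $\|f_k\|_{L^1(\mu)}$ I would invoke Lemma~\ref{le:majorize_f} applied to the tail subsequence $(\lambda_n)_{n\ge n_k+1}$. Since this tail is quasilacunary with the same parameters as $\Lambda$, the same proof yields a constant $d$ independent of $k$ such that $|f_k(x)|\le d\,x^{\lambda_{n_k+1}}\|f_k\|_\infty$ for all $x\in [0,1]$. Integrating against $\mu$ and applying Corollary~\ref{co:first}(i) gives
$$\|f_k\|_{L^1(\mu)}\le d\,\|f_k\|_\infty\int_{[0,1]}x^{\lambda_{n_k+1}}\,d\mu\le \frac{d\,\|\mu\|_S}{\lambda_{n_k+1}+1}\,\|f_k\|_\infty.$$

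For the matching lower bound on $\|f_k\|_1$ I would combine Lemmas~\ref{le:bernstein} and~\ref{le:elementary}. Since $\dim F_k\le N$ and every exponent in $F_k$ is at most $\lambda_{n_{k+1}}$, Bernstein yields $\|f_k'\|_\infty\le KN\lambda_{n_{k+1}}\|f_k\|_\infty$, and then Lemma~\ref{le:elementary} gives
$$\|f_k\|_1\ge \min\!\left(\frac{\|f_k\|_\infty}{2KN\lambda_{n_{k+1}}},\,\frac{\|f_k\|_\infty}{4}\right)\ge \frac{c\,\|f_k\|_\infty}{\lambda_{n_{k+1}}},$$
with $c$ independent of $k$; the two quantities in the minimum are comparable because $\lambda_{n_{k+1}}\ge \lambda_1>0$.

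Putting the two block estimates together and using \eqref{eq:ratio_lambdas} to absorb the difference between $\lambda_{n_k+1}$ and $\lambda_{n_{k+1}}$ delivers the required uniform inequality $\|f_k\|_{L^1(\mu)}\le C\|f_k\|_1$, from which the theorem follows. The step I expect to be the most delicate is the uniformity in $k$ of the constant in the shifted Clarkson--Erd\H{o}s estimate of Lemma~\ref{le:majorize_f}; this has to be justified by tracing through the proof in \cite{gl05} and observing that it produces a constant depending only on the quasilacunarity parameters $(q,N)$, which are shared by every tail of $\Lambda$.
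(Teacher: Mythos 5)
Your proof is correct and follows essentially the same route as the paper's: reduce to a uniform block estimate via Lemma~\ref{le:quasil1}, bound $\|f_k\|_{L^1(\mu)}$ from above by $d\,\|\mu\|_S\|f_k\|_\infty/\lambda_{n_k+1}$ using Lemma~\ref{le:majorize_f} and Corollary~\ref{co:first}, bound $\|f_k\|_1$ from below by $c\,\|f_k\|_\infty/\lambda_{n_{k+1}}$ via Lemmas~\ref{le:bernstein} and~\ref{le:elementary}, and close with~\eqref{eq:ratio_lambdas}. The only cosmetic difference is that the paper splits into two cases according to which term of the minimum in Lemma~\ref{le:elementary} is active, whereas you bound the minimum directly; your remark on the uniformity in $k$ of the constant in Lemma~\ref{le:majorize_f} is a point the paper uses implicitly and you justify correctly.
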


\begin{proof}
By Lemma~\ref{le:quasil1}, it is enough to show that there is a constant $C>0$ such that for any $k$ and any $f\in F_k$ we have $\|f\|_{L^1(\mu)}\le C\|f\|_1$. Let us then fix $k$ and $f\in F_k$. Applying Lemma~\ref{le:majorize_f} and Corollary~\ref{co:first}, we have
\begin{equation}\label{eq:estimate}
\begin{split}
\|f\|_{L^1(\mu)}&=
\int_0^1 |f(x)|\, d\mu(x)
\le  d\|f\|_\infty  \int_0^1 x^{\lambda_{n_k+1}}\, d\mu(x)\\
&\le    \frac{ d\|\mu\|_S \|f\|_\infty}{\lambda_{n_k+1}} .
\end{split}
\end{equation}

We apply now Lemma~\ref{le:elementary}; there are two possible cases. If $\|f\|_\infty\le 4\|f\|_1$, then it follows immediately from~\eqref{eq:estimate} that
\[
\|f\|_{L^1(\mu)}\le \frac{4 d\|\mu\|_S \|f\|_1}{\lambda_{n_k+1}}.
\]
We obtain thus the desired inequality and the theorem is proved in this case.

If the minimum in Lemma~\ref{le:elementary} is given by the other term, we use Lemma~\ref{le:bernstein}, which yields
\begin{equation}\label{eq:bernstein_here}
\|f'\|_\infty\le K \left(\sum_{n=n_k+1}^{n_{k+1}} \lambda_n \right)\|f\|_\infty \le KN\lambda_{n_{k+1}} \|f\|_\infty.
\end{equation}
Then, by Lemma~\ref{le:elementary},
\[
\|f\|_\infty^2\le 2\|f\|_1\|f'\|_\infty\le 2KN \lambda_{n_{k+1}} \|f\|_1\|f\|_\infty.
\] 
We cancel $\|f\|_\infty$ from both sides and  plug the resulting inequality  into~\eqref{eq:estimate}. With the aid of~\eqref{eq:ratio_lambdas}, one obtains
\[
\|f\|_{L^1(\mu)} \le  \frac{2KNd \lambda_{n_{k+1}} \|\mu\|_S \|f\|_\infty }{\lambda_{n_k+1}} \le 2KNd q^{2(N-1)} \|\mu\|_S \|f\|_1,
\]
which finishes the proof.
\end{proof}

Combining Theorem~\ref{th:Sarquasilac2} with Proposition~\ref{pr:sublinear}, we obtain a class of $\Lambda$ for which sublinearity is a necessary and sufficient condition for $\Lambda$-embedding.

\begin{corollary}\label{co:bibounded}
If, for some  increasing sequence $(n_k)$ of integers with $\sup_k(n_{k+1}-n_k)<\infty$ we have
\[
1<\inf_k \frac{\lambda_{n_{k+1}}}{\lambda_{n_k}}\le \sup_k \frac{\lambda_{n_{k+1}}}{\lambda_{n_k}} <\infty,
\]
then a measure $\mu$ is $\Lambda$-embedding if and only if it is sublinear.
\end{corollary}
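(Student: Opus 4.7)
The plan is to derive the corollary as a direct combination of Theorem~\ref{th:Sarquasilac2} and Proposition~\ref{pr:sublinear}, by checking that the hypotheses of the corollary deliver exactly what each of these two results requires.

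For the implication \emph{sublinear $\Rightarrow$ $\Lambda$-embedding}, I would first observe that the assumptions $\sup_k (n_{k+1}-n_k) < \infty$ and $\inf_k \lambda_{n_{k+1}}/\lambda_{n_k} > 1$ together constitute precisely the definition of a quasilacunary sequence as recalled at the start of Section~5. Hence Theorem~\ref{th:Sarquasilac2} applies and yields this direction immediately.

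For the reverse implication \emph{$\Lambda$-embedding $\Rightarrow$ sublinear}, I would invoke Proposition~\ref{pr:sublinear}. The only thing to verify is that the hypothesis of the corollary forces $\sup_n \lambda_{n+1}/\lambda_n < \infty$. Setting $M' := \sup_k \lambda_{n_{k+1}}/\lambda_{n_k}$, for any index $m$ with $n_k \le m < m+1 \le n_{k+1}$ the monotonicity of $(\lambda_n)$ gives
\[
\frac{\lambda_{m+1}}{\lambda_m} \le \frac{\lambda_{n_{k+1}}}{\lambda_{n_k}} \le M'.
\]
Only finitely many indices lie below $n_1$, and these contribute only finitely many ratios which are trivially bounded; thus $\sup_n \lambda_{n+1}/\lambda_n$ is finite and Proposition~\ref{pr:sublinear} applies, giving sublinearity of $\mu$.

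Combining the two directions completes the proof. There is no real obstacle: the only point that is not immediately a restatement of a previous result is the passage from bounded \emph{subsequence} ratios plus bounded gaps to bounded \emph{consecutive} ratios, and this is an elementary consequence of the monotonicity of $(\lambda_n)$.
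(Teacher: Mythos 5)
Your proposal is correct and follows exactly the route the paper intends: the paper gives no separate proof, stating only that the corollary follows by combining Theorem~\ref{th:Sarquasilac2} (sublinear implies embedding, since the hypotheses are precisely quasilacunarity) with Proposition~\ref{pr:sublinear} (embedding implies sublinear, once consecutive ratios are bounded). Your elementary verification that bounded subsequence ratios plus monotonicity force $\sup_n \lambda_{n+1}/\lambda_n<\infty$ is the one detail the paper leaves implicit, and you handle it correctly.
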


The next corollary follows from Proposition~\ref{pr:abstractcompactness} \tcr{and Theorem~\ref{th:Sarquasilac2}}.

\begin{corollary}\label{co:compactsublinear}
If $\Lambda$ is quasilacunary, then for any vanishing sublinear measure the embedding $\iota_\mu$ is compact.
\end{corollary}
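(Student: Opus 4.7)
The plan is to combine Theorem~\ref{th:Sarquasilac2} with Proposition~\ref{pr:abstractcompactness}, taking $\rho(\eps)=\eps$. First I would re-examine the proof of Theorem~\ref{th:Sarquasilac2} to verify that, in the quasilacunary case, the embedding constant of a sublinear $\mu$ is bounded by $M\|\mu\|_S$, where $M$ is a constant depending only on $\Lambda$ (visible in the final display of that proof, e.g.\ $M=2KNdq^{2(N-1)}$ from the "bad" case, which dominates the $4d$ from the easy case). This verifies (up to the harmless constant $M$) the hypothesis of Proposition~\ref{pr:abstractcompactness}: any $\mu$ with $\mu(J_\eps)\le C\eps$ is $\Lambda$-embedding with embedding constant $\le MC$.

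Next I would invoke Proposition~\ref{pr:abstractcompactness}: since $\mu$ is vanishing sublinear, $\lim_{\eps\to 0}\mu(J_\eps)/\eps=0$, so the proposition yields compactness of $\iota_\mu$. If the cleanest exposition requires sidestepping the literal "embedding constant at most $C$" hypothesis (rather than at most $MC$), I would simply rerun the three-line argument of Proposition~\ref{pr:abstractcompactness} in situ: for each $m$ split $\mu=\mu_m+\mu'_m$, observe that $\iota_{\mu_m}$ is compact by Proposition~\ref{pr:bicompact} (since $\supp\mu_m\subset[0,1-1/m]$), and apply Theorem~\ref{th:Sarquasilac2} to $\mu'_m$ to conclude $\|\iota_\mu-\iota_{\mu_m}\|=\|\iota_{\mu'_m}\|\le M\|\mu'_m\|_S\to 0$, so that $\iota_\mu$ is a norm limit of compact operators.

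The one small bookkeeping step is to check $\|\mu'_m\|_S\to 0$ from the vanishing sublinearity of $\mu$. For $0<\eps\le 1/m$ one has $\mu'_m(J_\eps)=\mu(J_\eps)$, and for $\eps>1/m$ one has $\mu'_m(J_\eps)=\mu(J_{1/m})\le \eps\cdot m\,\mu(J_{1/m})$. Hence
\[
\|\mu'_m\|_S=\sup_{0<\eps\le 1/m}\frac{\mu(J_\eps)}{\eps},
\]
which tends to $0$ as $m\to\infty$ by the definition of vanishing sublinearity. Since this is the only step that isn't a direct citation, it is the main (and very mild) obstacle; everything else is packaging the two earlier results.
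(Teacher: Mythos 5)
Your proposal is correct and follows exactly the route the paper takes: the paper's entire proof is the one-line citation of Proposition~\ref{pr:abstractcompactness} together with Theorem~\ref{th:Sarquasilac2}. Your extra care about the constant $M$ in the embedding bound and the verification that $\|\mu'_m\|_S=\sup_{0<\eps\le 1/m}\mu(J_\eps)/\eps\to 0$ are correct and simply make explicit what the paper leaves implicit.
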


\section{Some examples}

In general the property of being $\Lambda$-embedding depends on the sequence $\Lambda$, as shown by the next example.

\begin{example}\label{ex:twolambdas}
We intend to construct a measure $\mu$ and two lacunary sequences $\Lambda=(\lambda_n)_n$ and $\Lambda'=(\lambda'_n)_n$ such that $\mu$ is $\Lambda$-embedding but not $\Lambda'$-embedding.

For $0<a<1$, consider the function  $\lambda\mapsto \lambda a^{\lambda}$. It attains its maximum in $x_a=-\frac{1}{\ln a}$, and the maximum value is between $x_a/2$ and $x_a$; also, the limit for $\lambda\to\infty$ is 0. Since, for $a$ close to 1, $-\ln a$ is of the same order as $1-a$, it follows that $x_a$ is of the same order as $(1-a)^{-1}$.

Take then $\mu=\sum_k c_k \delta_{a_k}$, where $\delta_x$ defines the Dirac measure in $x$, $a_k\uparrow 1$ and $c_k>0$, $\sum_k c_k<\infty$. The values $c_k, a_k, \lambda_k$ will be defined recurrently. We start with $c_0=\lambda_0=1$ and $a_0=0$.  Suppose then that they have been defined for $k\ge n$. We choose then $\lambda_{n+1}$ large enough such that $\lambda_{n+1} a_k^{\lambda_{n+1}}\le 1$ for all $k\le n$. Then we take $a_{n+1}$ such that 

(i) $1-a_{n+1}\le \frac{1-a_n}{2}$;

(ii) $(n+1)(1-a_{n+1})\le \frac{1}{\lambda_{n+1}}$.\\
Finally, we define $c_{n+1}=(n+1)(1-a_{n+1})$.

Note first that (i) implies $1-a_n\le 2^{-n}$ and therefore
 $c:=\sum_k c_k\le \sum_k k 2^{-k}<\infty$. Moreover, using (i), we
 have also $c_{n+1}\leq (2/3)c_n$ for all $n\geq 3$. Now, since 
 $\int x^\lambda \, d\mu=\sum_k c_k a_k^{\lambda}$, it follows that,
 for all $n\geq 3$
 \[
\begin{split}
\int \lambda_n   x^{\lambda_n} \, d\mu &=\sum_k c_k \lambda_n a_k^{\lambda_n}
=\sum_{k<n} c_k \lambda_n a_k^{\lambda_n}+\sum_{k\ge n} c_k \lambda_n a_k^{\lambda_n}\\
&\le \sum_{k< n} c_k +\sum_{k\ge n} c_k \lambda_n 
\le c+ \sum_{k\geq n}(2/3)^{n-k}c_n\lambda_n=c+3c_n\lambda_n.
\end{split}
\]
Since (ii) implies that $c_n\lambda_n\leq 1$, it follows that, 
for all $n\geq 3$
\[\int \lambda_n   x^{\lambda_n} \, d\mu\leq c+3,\]
and therefore, for all $n$, there exists a positive constant $C>0$
such that \[\int \lambda_n   x^{\lambda_n} \, d\mu\leq C.\]
Now, if we take an arbitrary function $f$ of the form  
$\sum_{n} b_n \lambda_n\xln\in M^1_\Lambda$, we obtain
\[
\|f\|_{L^1(\mu)}\le \sum_n |b_n|\ \|\lambda_n\xln \|_{L^1(\mu)} \le
C\sum_n |b_n| \le C' \|f\|_1 ,
\]
(the last inequality follows from \eqref{eq:lacunary}). Therefore $\mu$ is $\Lambda$-embedding.

As for  $\Lambda'$, we define $\lambda'_n=x_{a_n}$. Then 
\[
\begin{split}
\int \lambda'_n x^{\lambda'_n} \, d\mu 
&=  \sum_k c_k \lambda'_n a_k^{\lambda'_n}\ge c_n \lambda'_n a_n^{\lambda'_n}.
\end{split}
\]
Since the maximum value of $\lambda\to\lambda a^\lambda$ is greater
than $x_a/2$,  and since $x_{a_n}$ is of the same order as
$\frac{1}{1-a_n}$ when $n$ tends to $\infty$, there exists a positive
constant $C_1$ such that 
\[\int \lambda'_n x^{\lambda'_n} \, d\mu \geq C_1\frac{c_n}{1-a_n}.\]
Then, by definition of $c_n$, it follows that 
\[\int \lambda'_n x^{\lambda'_n} \, d\mu \geq C_1n.\]
Since $\| \lambda'_n x^{\lambda'_n}\|_{L^1(m)}\le 1$, $\mu$ is not $\Lambda'$-embedding.

\end{example}

The next example shows that for sequences that are not quasilacunary, the sublinearity of the measure $\mu$ is not in general a sufficient condition for $\Lambda$-embedding.

\begin{example}
Note first that if $\delta_t$ is the Dirac measure in $t\in [0,1)$, then $\|\delta_t\|_S=\frac{1}{1-t}$; therefore $\delta'_t=(1-t)\delta_t$ is a sublinear measure of unit (sublinear) norm.

Let us consider, for all positive integers $p,q$, the function $h_{p,q}(x)=x^p(1-x)^q$. We are interested in some estimates related to this function when $p,q\to\infty$ and $q/p\to 0$.

First, note that the maximum of $(1-x)h_{p,q}(x)$ is attained in $t_{p,q}=\frac{p}{p+q+1}$. If $p,q\to\infty$ and $q/p\to 0$, then this maximum value is of order $\frac{1}{p^{q+1}}$.

Secondly, to estimate $\|h_{p,q}\|_1$, note that it is equal to $B(p+1,q+1)$ (Euler's beta function), and thus (by standard estimates for $B$) we have
\[
\|h_{p,q}\|_1\sim \frac{(p+1)^{p+\frac{1}{2}}(q+1)^{q+\frac{1}{2}}}{(p+q+2)^{p+q+\frac{3}{2}}  }.
\] 
After some simple computations using the definition of $e$, it follows that
\begin{equation}\label{eq:hpq}
\frac{\int |h_{p,q}| d\delta'_{t_{p,q}} }{ \|h_{p,q}\|_1 } \sim (q+1)^{\frac{1}{2}}.
\end{equation}

Define then the sequence $\Lambda$ by adding together the groups of consecutive integers from $k^7$ to $k^7+k^5$, for all $k\ge 1$. Then
\[
\sum_n \frac{1}{\lambda_n}= \sum_{k=1}^\infty \sum_{j=0}^{k^5} \frac{1}{k^7+j}\le \sum_{k=1}^\infty \frac{k^5+1}{k^7}<\infty.
\] 
Since $h_{p,q}$ is a polynomial in $x^p, x^{p+1},\dots, x^{p+q}$, we have $h_{k^7,k^5}\in \Mtz$ for all $k\ge 1$. Define the measure $\mu=\sum_{k\ge 1}\frac{1}{k^2}\delta'_{t_{k^7,k^5}}$. Then $\mu$ is sublinear, with $\|\mu\|_S\le \frac{\pi^2}{6}$. 

On the other hand,
by~\eqref{eq:hpq} we have
\[
\int|h_{k^7,k^5}|d\mu\ge\frac{1}{k^2} \int |h_{k^7,k^5}| d\delta'_{t_{k^7,k^5}}
\ge\frac{1}{k^2} (k^5+1)^{\frac{1}{2}}\|h_{k^7,k^5}\|_1,
\]
and therefore
\[
\sup_k \frac{\int|h_{k^7,k^5}|d\mu}{\|h_{k^7,k^5}\|_1}\to \infty.
\]
Thus $\mu$ is not $\Lambda$-embedding.
\end{example}


\section{The sequence $\lambda_n=n^2$}

To show an example of what can be obtained going beyond quasilacunary sequences, we give in this section a more precise estimate for the function $\kappa$ in the case of the sequence $\lambda_n=n^2$. This will be done in several steps, which essentially make explicit the calculations that are involved in the general results, as found, for instance, in \cite[4.2]{be95}. The sequence $n^2$ is a main example of a \emph{standard} sequence, that is, one for which $\lambda_{n+1}/\lambda_n\to1$; it is a test for several unsolved questions in the theory of M\"untz spaces (see~\cite{gl05}). For the rest of this section, we will denote $\Lambda=(n^2)_{n\ge 1}$ and $\tilde\Lambda=(n^2+1)_{n\ge 1}$. 

First, if $f(x)=a_\gamma x^\gamma+ \sum_{n=1}^m a_n x^{\gamma_n}$, then by standard euclidian space arguments we have
\begin{equation}\label{eq:gram}
 |a_\gamma|\le d^{-1}\|f\|_2,
\end{equation}
where $d$ is the distance in $L^2[0,1]$ from $x^\gamma$ to the span of the functions $x^{\gamma_1}, \dots, x^{\gamma_m}$. 
A calculation involving biorthogonal bases and Cauchy determinants
that can be found in \cite[pp. 176--177]{be95} shows that 
\[
 d=\frac{1}{\sqrt{2\gamma+1}}\prod_{n=1}^m \left| \frac{\gamma-\gamma_n}{\gamma+\gamma_n+1}\right|.
\]

We intend now to apply~\eqref{eq:gram} to the case of the M\"untz space $M_{\tilde\Lambda}^1$. Suppose
 $f(x)=\sum_{n\ge 1} \tilde a_n x^{n^2+1}$. According to~\eqref{eq:gram}, we have
\begin{equation}\label{eq:gram2}
 |\tilde a_m|\le \sqrt{2m^2+3} \mathop{\prod_{n=1}^\infty}_{n\not=m} \left| \frac{m^2+n^2+3}{m^2-n^2}\right| \|f\|_2.
\end{equation}
We break in three parts the infinite product in the right-hand side. The first two are estimated simply:
\begin{equation}\label{eq:prod1}
 \prod_{n=1}^{m-1} \left| \frac{m^2+n^2+3}{m^2-n^2}\right| \le
 \prod_{n=1}^{m-1}  \frac{(m+n)^2}{m^2-n^2}=\frac{(2m-1)!}{m!(m-1)!}.
\end{equation}

\begin{equation}\label{eq:prod2}
 \prod_{n=m+1}^{2m} \left| \frac{m^2+n^2+3}{m^2-n^2}\right| \le
 \prod_{n=m+1}^{2m}  \frac{(m+n)^2}{n^2-m^2}=\frac{(3m)!}{(2m)!m!}.
\end{equation}

As for the third, we have
\[
 \prod_{n=2m+1}^{\infty} \left| \frac{m^2+n^2+3}{m^2-n^2}\right| =
 \prod_{n=2m+1}^{\infty} \left(1+\frac{2m^2+3}{n^2-m^2}\right).
\]
Using the inequality $\log(1+x)\le x$, we obtain
\[
\begin{split}
 \log \left( \prod_{n=2m+1}^{\infty} \left(1+\frac{2m^2+3}{n^2-m^2}\right)  \right)&=\sum_{n=2m+1}^\infty \log  \left(1+\frac{2m^2+3}{n^2-m^2}\right)\\
&\le
(2m^2+3) \sum_{n=2m+1}^\infty \frac{1}{n^2-m^2}\le \frac{(\log 3)(2m^2+3)}{2m},
\end{split}
\]
and thus
\begin{equation}\label{eq:prod3}
  \prod_{n=2m+1}^{\infty} \left| \frac{m^2+n^2+1}{m^2-n^2}\right|\le 3^{\frac{2m^2+3}{2m}}.
\end{equation}
Remembering now that Stirling's formula says that
\[
 \sqrt{2\pi N}\left(\frac{N}{e}\right)^N e^{\frac{1}{12N+1}}\le  N!
\le  \sqrt{2\pi N}\left(\frac{N}{e}\right)^N e^{\frac{1}{12N}}
\]
we obtain from~\eqref{eq:prod1},~\eqref{eq:prod2}, and~\eqref{eq:prod3}, after some majorizations,  
\[
 |\tilde a_m|\le m 3^{\frac{8m^2+3}{2m}}\|f\|_2.
\]

Consider now a polynomial $p(x)=\sum_m a_m x^{m^2}\in\Mtz$. If $\tilde p(x)=\int_0^x p(t)\, dt$, then $\tilde p(x)=\sum_m \frac{a_m}{m^2+1} x^{m^2+1}\in M_{\tilde\Lambda}^1$, and we may apply the previous estimate to $\tilde p$, obtaining
\[
 |a_m| \le m(m^2+1)3^{\frac{8m^2+3}{2m}}\|\tilde p\|_2
\le  m(m^2+1)3^{\frac{8m^2+3}{2m}}\|\tilde p\|_\infty \le 100^m \|p\|_1.
\]

Take  $\eps>0$. For any $x\in[0, 1-\eps)$, we have
\[
 |p(x)|\le \sum_m |a_m| (1-\eps)^{m^2}\le
\sum_m  100^m (1-\eps)^{m^2} \|p\|_1 \le 
  C_1 e^{\frac{C}{\scriptstyle\eps}} \|p\|_1.  
\]
The last estimate is obtained from the fact \cite[p 57]{ha71} that, for $a>1$,
\[
 \sum_{m\ge1} a^m x^{m^2} \sim e^{-\frac{(\log a)^2}{4\log x}}
\]
when $x\nearrow 1$.

In conclusion, for the sequence $\Lambda=(i^2)_{i\ge1}$ we can use in Theorem~\ref{th:generalsufficient} and Corollary~\ref{co:mu(Jeps)} the function
\[
 \kappa(t)=C_1 e^{\frac{C}{\scriptstyle 1-t}}
\]
in order to obtain sufficient conditions for a $\Lambda$-embedding measure $\mu$.
One can see that there is a big gap between these conditions and the necessary sublinearity given by Proposition~\ref{pr:sublinear}.

\section{Applications: weighted composition operators}\label{se:compo}

A starting point for this paper was Section 4.2 from \cite{aathesis}, which studies certain weighted composition operators on $\Mtz$. We show below how the main results therein fit in our general frame.  In this section $\Lambda$ is an arbitrary fixed sequence.

Let $\phi,\psi$ be two Borel functions on $[0,1]$, such that $\phi([0,1])\subset [0,1]$. For any Borel function $f$ one can define  
\[
C_\phi(f)=f\circ \phi,\qquad \T_\psi(f)=\psi f.
\]
At this level of generality, $C_\phi$ and $\T_\psi$ are linear mappings in the vector space of Borel functions. We are interested in conditions under which the \emph{weighted composition operator} $ \T_\psi\circ C_\phi$ maps $\Mtz$ boundedly into $L^1$; in case this happens, we may want to compute the essential norm.

There is a standard way to view this as an embedding problem. Recall that the \emph{pullback} of a measure $\nu$ by $\phi$ is the measure $\phi^*\nu$ on $[0,1]$ defined by
\[
\phi^*\nu(E)=\nu (\phi^{-1}(E))
\]
for any Borel set $E$. Then the formula
\begin{equation}\label{eq:comp-measure}
 \int_0^1 \psi(x) f(\phi(x))\, dx= \int_{[0,1]} f\, d(\phi^*(\psi dm))
\end{equation}
is easily checked on characteristic functions, whence it extends to all Borel functions on~$[0,1]$. 

\begin{lemma}\label{le:relation-embedding}
Define the measure $\mu= \phi^*(\psi dm)$. Then

(i)  $\T_\psi\circ C_\phi$ is bounded from $\Mtz$ to $L^1$ if and only if $\mu$ is a $\Lambda$-embedding measure.

 (ii) $\T_\psi\circ C_\phi$ is compact from $\Mtz$ to $L^1$ if and only if $\iota_\mu$ is compact.

(iii) Suppose that $m(\phi^{-1}(\{1\}))=0$. If $\mu'_n=\mu_{|[1-\frac{1}{n}, 1]}$, then
\[
\|\T_\psi\circ C_\phi\|_e=\|\iota_\mu\|_e=\lim_{n\to\infty}\|\iota_{\mu'_n}\|.
\]
\end{lemma}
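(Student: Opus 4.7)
The plan is to derive all three statements from the fundamental isometric identity
\[
\|\T_\psi C_\phi f\|_1 \;=\; \int_0^1 \psi(x)\,|f(\phi(x))|\, dx \;=\; \int_{[0,1]}|f|\, d\mu \;=\; \|\iota_\mu f\|_{L^1(\mu)},
\]
which is obtained by applying the change of variables formula~\eqref{eq:comp-measure} to $|f|$ (the setting implicitly requires $\psi\ge 0$, as otherwise $\mu$ would not be a positive measure). For part~(i), this identity says that the operator norms of $\T_\psi C_\phi\colon \Mtz\to L^1(m)$ and $\iota_\mu\colon \Mtz\to L^1(\mu)$ coincide, so boundedness of one is equivalent to boundedness of the other. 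For part~(ii), applying the identity to $f-g$ gives
\[
\|\T_\psi C_\phi f - \T_\psi C_\phi g\|_1 = \|\iota_\mu f - \iota_\mu g\|_{L^1(\mu)},
\]
so for any bounded sequence $(f_n)\subset \Mtz$, the image sequence is Cauchy under $\T_\psi C_\phi$ if and only if it is Cauchy under $\iota_\mu$; hence the compactness of the two operators is equivalent.

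For part~(iii), the second equality $\|\iota_\mu\|_e = \lim_n \|\iota_{\mu'_n}\|$ is exactly Theorem~\ref{th:essentialnorm}, so the task reduces to proving $\|\T_\psi C_\phi\|_e = \lim_n \|\iota_{\mu'_n}\|$. I would apply Lemma~\ref{le:abstractessential} with $X=\Mtz$, $T=\T_\psi C_\phi$, $(E,\nu)=([0,1],m)$, and the decreasing family $E_m=\phi^{-1}(J_{1/m})$. The hypothesis $m(\phi^{-1}(\{1\}))=0$ translates directly into $m(\bigcap_m E_m)=0$. A pullback computation then yields
\[
\|P_m\T_\psi C_\phi f\|_1 = \int_{E_m}\psi(x)|f(\phi(x))|\, dx = \int_{J_{1/m}}|f|\, d\mu = \|\iota_{\mu'_m}f\|_{L^1(\mu)},
\]
so $\|P_mT\| = \|\iota_{\mu'_m}\|$, and the formula for $\|T\|_e$ follows.

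The only nontrivial verification is the compactness of $(I-P_m)T$: for a bounded sequence $(f_n)\subset \Mtz$, Lemma~\ref{le:ihab} furnishes a subsequence $(f_{n_k})$ converging uniformly on $[0,1-1/m]$, whence $(f_{n_k}\circ\phi)$ converges uniformly on $[0,1]\setminus E_m$; since $\psi\in L^1(m)$ (because $\mu=\phi^*(\psi\, dm)$ is finite, so $\int_0^1\psi\, dm=\mu([0,1])<\infty$), multiplication by $\psi\,\chi_{[0,1]\setminus E_m}$ gives convergence in $L^1(m)$, which is precisely the compactness needed. This pointwise-times-$L^1$-weight step is the main point where one uses that finiteness of $\mu$ absorbs any roughness of $\psi$; once it is in hand, Lemma~\ref{le:abstractessential} closes the argument.
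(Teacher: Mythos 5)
Your proposal is correct and follows essentially the same route as the paper: the change-of-variables identity \eqref{eq:comp-measure} identifies $\T_\psi\circ C_\phi$ isometrically with $\iota_\mu$ (the paper writes this as the factorization $\T_\psi\circ C_\phi=J\circ\iota_\mu$ with $J$ an isometry), which gives (i) and (ii), and (iii) is obtained by applying Lemma~\ref{le:abstractessential} with exactly the same choices $E_m=\phi^{-1}(J_{1/m})$ and $\|P_mT\|=\|\iota_{\mu'_m}\|$. The only cosmetic difference is that you reprove the compactness of $(I-P_m)T$ directly from Lemma~\ref{le:ihab}, where the paper cites Proposition~\ref{pr:bicompact} through the identity $(I-P_m)T=J\circ(\iota_\mu-\iota_{\mu'_m})$.
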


\begin{proof}
Formula~\eqref{eq:comp-measure} implies that the map $J$ defined by $J(f)=\psi (f\circ \phi)$ is an isometry from $L^1(\mu)$ into $L^1$. Since we have 
\begin{equation}\label{eq:8.0}
 \T_\psi\circ C_\phi=J\circ\iota_\mu,
\end{equation}
(i) and (ii) follow immediately.

As for (iii), we will apply Lemma~\ref{le:abstractessential} to the case $X=\Mtz$, $(E,\nu)=([0,1], m)$, $T=\T_\psi\circ C_\phi$, and $E_n=\phi^{-1}([1-\frac{1}{n}, 1])$. The hypothesis on $\phi$ implies that 
\[
 m\left(\bigcap_n \phi^{-1}([1-\frac{1}{n}, 1])\right)=0;
\]
on the other side, since $P_n$ is the natural projection onto $L^1(\phi^{-1}([1-\frac{1}{n}, 1]))$, one verifies easily that 
\[
(I-P_n)T=J\circ(\iota_\mu-\iota_{\mu'_n)},
\]
whence $(I-P_n)T$ is compact by Proposition~\ref{pr:bicompact}. It follows then from Lemma~\ref{le:abstractessential} that
\[
\|\T_\psi\circ C_\phi\|_e =\lim_n\|J\circ \iota_{\mu'_n}  \|
=\lim_n\| \iota_{\mu'_n}  \|. \qedhere
\]
\end{proof}

One can now recapture some of the results in \cite{aathesis}, where a certain regularity is assumed for the functions $\phi$ and $\psi$; in this case we can describe more precisely the measure $\phi^*(\psi\,dm)$, as shown by the following lemma, whose proof is a simple computation.

\begin{lemma}\label{le:density}
Suppose $[a,b]\subset[0,1]$ has the property that $\phi^{-1}([a,b])$ is the union of $p$ intervals $[x_i,y_i]$, which have at most an end point in common, and, for each $i=1,\dots,p$, $\phi_i:=\phi_{|[x_i,y_i]}$ is a function in $C^1([x_i,y_i])$, with $\phi_i'(x)\not=0$. Then the restriction of $\phi^*(\psi\,dm)$ to $[a,b]$ is absolutely continuous, with density 
\begin{equation}\label{eq:density}
\rho(x)=\sum_{i=1}^p \frac{\psi(\phi_i^{-1}(x))}{\phi_i'(\phi_i^{-1}(x))}.
\end{equation}
\end{lemma}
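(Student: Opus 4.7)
The plan is to check the density formula directly on Borel subsets $E\subset[a,b]$, reducing the statement to piecewise changes of variables under the $p$ local diffeomorphisms $\phi_i$. The starting point is the definition of the pullback together with the additivity of Lebesgue integration:
\[
\phi^*(\psi\,dm)(E)=\int_{\phi^{-1}(E)}\psi(x)\,dx=\sum_{i=1}^p\int_{[x_i,y_i]\cap\phi^{-1}(E)}\psi(x)\,dx,
\]
where the splitting into a disjoint union is legitimate because the intervals $[x_i,y_i]$ share at most an endpoint, a set of Lebesgue measure zero.

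Next I would exploit the hypothesis that $\phi_i\in C^1([x_i,y_i])$ with $\phi_i'$ nowhere vanishing. Since $\phi_i'$ is continuous and nonzero, it keeps a constant sign on $[x_i,y_i]$, so $\phi_i$ is strictly monotone and hence a $C^1$-diffeomorphism from $[x_i,y_i]$ onto its image. The hypothesis that the $[x_i,y_i]$ together fill $\phi^{-1}([a,b])$, combined with monotonicity and the intermediate value theorem, forces $\phi_i([x_i,y_i])=[a,b]$. The classical change of variables $y=\phi_i(x)$ then gives
\[
\int_{[x_i,y_i]\cap\phi^{-1}(E)}\psi(x)\,dx=\int_E \frac{\psi(\phi_i^{-1}(y))}{|\phi_i'(\phi_i^{-1}(y))|}\,dy.
\]
Summing over $i$ yields $\phi^*(\psi\,dm)(E)=\int_E\rho(y)\,dy$, i.e.\ absolute continuity of the restriction to $[a,b]$ with the claimed density (up to absolute values, which the statement in the lemma suppresses, implicitly identifying $\phi_i'$ with $|\phi_i'|$ or assuming each $\phi_i$ is increasing).

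There is essentially no obstacle: the whole argument is a bookkeeping exercise around the one-dimensional change of variables formula. The only point requiring a moment of care is justifying that each $\phi_i$ is surjective onto $[a,b]$, so that every $y\in E$ actually has one preimage in $[x_i,y_i]$ and the indicator $\chi_{[x_i,y_i]\cap\phi^{-1}(E)}(x)$ transforms cleanly into $\chi_E(y)$ under the substitution. Everything else — measurability of $\rho$, the fact that $\psi\circ\phi_i^{-1}$ and $1/\phi_i'\circ\phi_i^{-1}$ are Borel — is immediate from the continuity of $\phi_i^{-1}$ and $\phi_i'$.
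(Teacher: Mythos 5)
Your change-of-variables argument is precisely the ``simple computation'' that the paper alludes to but does not write out, and it is carried out correctly; you are also right that the density in \eqref{eq:density} should involve $|\phi_i'|$ rather than $\phi_i'$ (the application in Proposition~\ref{pr:ihab2} confirms this, since there the absolute values appear in $L(x_i)$). One small caveat: your claim that the hypotheses \emph{force} $\phi_i([x_i,y_i])=[a,b]$ via the intermediate value theorem is not quite true in general --- if $x_i=0$ or $y_i=1$, or if $\phi$ is merely Borel (possibly discontinuous) off the intervals $[x_j,y_j]$, the image $\phi_i([x_i,y_i])$ can be a proper subinterval of $[a,b]$. But in that case formula \eqref{eq:density} is itself ill-posed (it evaluates $\phi_i^{-1}$ on all of $[a,b]$), and the correct general statement simply inserts the indicator $\chi_{\phi_i([x_i,y_i])}(x)$ into the $i$-th summand; your substitution yields exactly that, and surjectivity does hold in the situations where the lemma is actually invoked.
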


The next  proposition is Lemma 4.2.8 in \cite{aathesis}. 

\begin{proposition}\label{pr:ihab1}
Suppose that $\phi\in C^1$. Then $  C_\phi$ is bounded from $\Mtz$ to $L^1$ if and only if either $\phi([0,1])\subset [0,1)$, or $\phi^{-1}(\{1\})\subset \{0,1\}$ and $\phi(x_0)=\tcr{1} \Longrightarrow \phi'(x_0)\not=0 $.
\end{proposition}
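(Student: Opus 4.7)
The plan is to apply Lemma~\ref{le:relation-embedding}(i) with $\psi\equiv 1$, which reduces the statement to: $\mu := \phi^{*}(dm)$ is $\Lambda$-embedding if and only if either $\phi([0,1]) \subset [0,1)$, or $\phi^{-1}(\{1\}) \subset \{0,1\}$ with $\phi'(x_0) \ne 0$ at each $x_0 \in \phi^{-1}(\{1\})$. The proof then splits naturally into sufficiency and necessity.

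For sufficiency, if $\phi([0,1]) \subset [0,1)$ then by compactness and continuity $\sup\phi < 1$, so $\supp\mu \subset [0, 1-\eps]$ for some $\eps>0$ and Corollary~\ref{co:compactsupport}(i) applies. Otherwise, each $x_0$ with $\phi(x_0)=1$ lies in $\{0,1\}$ and has $\phi'(x_0)\ne 0$, so the one-sided inverse function theorem yields a small closed interval (of the form $[0,\eta]$ or $[1-\eta,1]$) on which $\phi$ is a $C^1$ diffeomorphism onto some $[1-\eta',1]$. A compactness argument furnishes $\delta>0$ such that $\phi^{-1}([1-\delta,1])$ is a finite disjoint union of closed intervals on each of which $\phi$ is a $C^1$ diffeomorphism with nonvanishing derivative. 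Lemma~\ref{le:density} then represents $\mu_{|[1-\delta,1]}$ as $\rho\,dm$ with $\rho$ continuous (hence bounded) on the compact set $[1-\delta,1]$, so Corollary~\ref{co:compactsupport}(ii) yields the $\Lambda$-embedding property.

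For necessity, I prove the contrapositive. If the alternative fails, there exists $x_0\in[0,1]$ with $\phi(x_0)=1$ and $\phi'(x_0)=0$: either $\phi^{-1}(\{1\})$ contains an interior point (forcing $\phi'(x_0)=0$ by the interior-maximum principle) or else contains an endpoint at which $\phi'$ vanishes by hypothesis. By continuity of $\phi'$, for every $\eta>0$ there is $\delta_\eta>0$ with $|\phi'|\le \eta$ on $[x_0-\delta_\eta, x_0+\delta_\eta]\cap[0,1]$, whence $|\phi(x)-1|\le \eta|x-x_0|$ there. Consequently, for $\eps\le\eta\delta_\eta$,
\[
\mu(J_\eps) = m\bigl(\phi^{-1}(J_\eps)\bigr) \ge m\bigl(\{x\in[0,1]: |x-x_0|\le \eps/\eta\}\bigr) \ge \eps/\eta.
\]
Since $\eta>0$ is arbitrary, $\liminf_{\eps\to 0}\mu(J_\eps)/\eps = \infty$, and Lemma~\ref{le:the function} precludes $\mu$ from being $\Lambda$-embedding.

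The main technical care is in the sufficiency direction at the endpoints $x_0\in\{0,1\}$, where the inverse function theorem must be applied one-sidedly and the density formula of Lemma~\ref{le:density} invoked on half-neighbourhoods; this is routine once laid out. The necessity argument rests on the simple but crucial observation that a value $1$ attained at an interior maximum forces $\phi'$ to vanish there, which is what ties the two clauses of the alternative together.
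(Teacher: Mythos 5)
Your proposal is correct and follows essentially the same route as the paper: reduce via Lemma~\ref{le:relation-embedding}(i) to the embedding property of $\mu=\phi^*(dm)$, handle sufficiency with Corollary~\ref{co:compactsupport}(i) (range bounded away from $1$) and with Lemma~\ref{le:density} plus Corollary~\ref{co:compactsupport}(ii) (nonvanishing one-sided derivatives at the endpoints), and obtain necessity by showing that $\phi(x_0)=1$ with $\phi'(x_0)=0$ forces $\mu(J_\eps)/\eps\to\infty$, contradicting Lemma~\ref{le:the function}, together with the interior-maximum observation. The only cosmetic difference is that you quantify the necessity estimate over all small $\eps$ at once (cleanly yielding the $\liminf$ statement), whereas the paper fixes $\delta$ and exhibits particular scales $\delta\eps$; the substance is identical.
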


\begin{proof}
If $\phi([0,1])\subset [0,a]$ with $a<1$, then $\phi^*dm$ is supported in $[0,a]$ and thus $\Lambda$-embedding by Corollary~\ref{co:compactsupport}. \tcr{By Lemma~\ref{le:relation-embedding}, we conclude that $C_\phi$ is bounded from $\Mtz$ to $L^1$.}

Suppose now that $x_0\in [0,1]$ satisfies $\phi(x_0)=1$ and $\phi'(x_0)=0$. If $\delta>0$, let $\eps>0$ be such that $|\phi'(x)|\le \delta$ for $|x-x_0|<\eps$.  Then $\phi(x)\ge 1-\delta\eps$ for $|x-x_0|<\eps$, whence $(\phi^*m)(J_{\delta\eps})\ge \eps$. Therefore $\frac{(\phi^*m)(J_{\delta\eps})}{\delta\eps}\ge \delta^{-1}$, which implies by Lemma~\ref{le:the function} that $\phi^*m$ is not $\Lambda$-embedding \tcr{and therefore $C_\phi$ is not bounded from $\Mtz$ to~$L^1$}. 

Thus, if \tcr{$C_\phi$ is bounded from $\Mtz$ to $L^1$}, then $\phi(x_0)=1$ implies $\phi'(x_0)\not=0$. This already proves the necessity part of the proposition, since if $x_0\in (0,1)$ and $\phi(x_0)=1$ then $x_0$ is a local maximum for $\phi$ and thus $\phi'(x_0)=0$.

Finally, let us suppose that $\phi^{-1}(\tcr{\{1\}})=\{0,1\}$, $\phi'(0)\not=0$, $\phi'(1)\not=0$ (the other cases are simpler). Then there is $\delta>0$ such that $\phi^{-1}([1-\delta,1])$ is the union of two intervals $[0,a_1]$ and $[a_2,1]$ on which $\phi'$ is not zero. We may apply Lemma~\ref{le:density} to conclude that
$\phi^*m$ restricted to $J_\delta$ is absolutely continuous with respect to Lebesgue measure, with bounded density; it follows then by Corollary~\ref{co:compactsupport} (ii) that $\phi^*m$ is  $\Lambda$-embedding, \tcr{that is, $C_\phi$ is bounded}. 
\end{proof}

We will now state  a definition appearing in~\cite{aathesis}. One says that the measurable function $\phi:[0,1]\to [0,1]$ \emph{satisfies the condition $(\alpha)$} if:

(a) $\phi^{-1}(1)=\{x_1, \dots x_p\}$ is finite;

(b) there exists $\eps>0$ such that, for each $i=1,\dots, p$, $\phi\in C^1([x_i-\eps,x_i])$ and $\phi\in C^1([x_i,x_i+\eps])$;

(c) $\phi'_-(x_i)>0$ and $\phi'_+(x_i)\tcr{<}0$ for all $i$, \tcr{where $\phi'_-(x_i)$ (respectively $\phi'_+(x_i)$) is the left (respectively right) derivative of $\phi$ at $x_i$};

(d) there exists $\alpha<1$ such that, if $x\not\in\bigcup_{i=1}^p (x_i-\eps, x_i+\eps)$, then $\phi(x)\le \alpha$. 

The next result combines Theorems~4.2.11 and~4.2.15 from \cite{aathesis}.

\begin{proposition}\label{pr:ihab2}
 Suppose $\phi:[0,1]\to [0,1]$ satisfies the condition $(\alpha)$, while $\psi:[0,1]\to\CC$ is continuous. Then $\T_\psi\circ C_\phi$ is bounded, and
\[
 \|\T_\psi\circ C_\phi\|_e=\sum_{i=1}^p |\psi(x_i)| L(x_i),
\]
 where
\[
 L(x_i)=\begin{cases}
         \frac{1}{\phi'_-(x_i)} +  \frac{1}{|\phi'_+(x_i)|} &\mbox{if } x_i\in(0,1),\\
 \frac{1}{\phi'_-(x_i)}&\mbox{if } x_i=1,\\
\frac{1}{|\phi'_+(x_i)|}&\mbox{if } x_i=0.
        \end{cases}
\]
\end{proposition}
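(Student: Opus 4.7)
The plan is to convert the problem into an embedding problem by setting $\mu=\phi^*(|\psi|\,dm)$, a positive Borel measure on $[0,1]$. Applying formula~\eqref{eq:comp-measure} with $|\psi|$ in place of $\psi$ gives $\|\T_\psi C_\phi f\|_1=\int|\psi|\,|f\circ\phi|\,dm=\|f\|_{L^1(\mu)}$ for every $f\in\Mtz$; hence $\T_\psi C_\phi$ is bounded from $\Mtz$ to $L^1(m)$ if and only if $\iota_\mu:\Mtz\to L^1(\mu)$ is bounded, and the two norms coincide.

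The next step is to describe $\mu$ near $1$. By condition $(\alpha)$(d) there exists $\delta_0\in(0,1)$ with $\phi^{-1}(J_{\delta_0})\subset\bigcup_{i=1}^p(x_i-\eps,x_i+\eps)$, so $\phi^{-1}(J_{\delta_0})$ is a finite disjoint union of intervals on each of which $\phi$ is $C^1$ with nonvanishing derivative by (b) and (c). Lemma~\ref{le:density} yields $d\mu_{|J_{\delta_0}}=\rho\,dm_{|J_{\delta_0}}$, where, if $\phi_{i,-}$ and $\phi_{i,+}$ denote the restrictions of $\phi$ to $[x_i-\eps,x_i]$ and $[x_i,x_i+\eps]$,
\[
\rho(t)=\sum_{i=1}^{p}\left(\frac{|\psi(\phi_{i,-}^{-1}(t))|}{\phi'_-(\phi_{i,-}^{-1}(t))}+\frac{|\psi(\phi_{i,+}^{-1}(t))|}{|\phi'_+(\phi_{i,+}^{-1}(t))|}\right),
\]
with the convention that the first summand is omitted when $x_i=0$ and the second when $x_i=1$. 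Since $\psi$ is continuous and $\phi'_\pm(x_i)\ne 0$, $\rho$ is bounded on $J_{\delta_0}$, and continuity of $\psi$ and of the one-sided derivatives at $x_i$ gives $\lim_{t\to 1}\rho(t)=\sum_{i=1}^{p}|\psi(x_i)|L(x_i)=:a$.

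Boundedness of $\T_\psi C_\phi$ is then immediate from Corollary~\ref{co:compactsupport}(ii) applied to $\mu$, and Corollary~\ref{co:essential-abs-cont-limit} gives $\|\iota_\mu\|_e=a$. To transfer this to $\T_\psi C_\phi$, I would apply Lemma~\ref{le:abstractessential} directly to $T=\T_\psi C_\phi:\Mtz\to L^1([0,1],m)$ with $E_n=\phi^{-1}(J_{1/n})$: by $(\alpha)$(a), $\bigcap_n E_n=\phi^{-1}(\{1\})$ is finite and so has Lebesgue measure $0$; moreover, on $E_n^c$ one has $\phi(x)\le 1-1/n$, so $(I-P_n)T$ factors as $f\mapsto\chi_{E_n^c}\psi\cdot(f_{|[0,1-1/n]}\circ\phi)$, the composition of the restriction map $\Mtz\to C([0,1-1/n])$, which is compact by Lemma~\ref{le:ihab}, with a bounded operator into $L^1(m)$; hence $(I-P_n)T$ is compact. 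A change of variables shows $\|P_nT\|=\|\iota_{\mu'_n}\|$; combining this with Lemma~\ref{le:abstractessential} and Theorem~\ref{th:essentialnorm} yields
\[
\|\T_\psi C_\phi\|_e=\lim_n\|\iota_{\mu'_n}\|=\|\iota_\mu\|_e=a.
\]

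The main obstacle I expect is the bookkeeping in the density formula at the endpoints $x_i\in\{0,1\}$, where only one one-sided branch of $\phi^{-1}$ is available and the statement of Lemma~\ref{le:density} must be adapted accordingly; this is precisely what dictates the three cases in the definition of $L(x_i)$.
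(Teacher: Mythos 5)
Your proposal is correct and follows essentially the same route as the paper: pull back to the measure $\mu=\phi^*(|\psi|\,dm)$, identify its density near $1$ via Lemma~\ref{le:density}, apply Corollary~\ref{co:essential-abs-cont-limit} to get $\|\iota_\mu\|_e=a$, and transfer the essential norm back to $\T_\psi\circ C_\phi$. The only difference is that you re-derive the transfer step inline (your application of Lemma~\ref{le:abstractessential} with $E_n=\phi^{-1}(J_{1/n})$ is exactly the content of Lemma~\ref{le:relation-embedding}~(iii), whose hypothesis $m(\phi^{-1}(\{1\}))=0$ you correctly check from condition $(\alpha)$(a)), and you are slightly more careful than the paper in using $|\psi|$ for the pullback measure.
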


\begin{proof}
If we define $\mu=\phi^*(\psi\,dm)$, then condition $(\alpha)$ implies that,
for some $\delta>0$, $\phi^{-1}([1-\delta,1])$ is the union of $2p$ intervals $[x_i-\eta_i,x_i]$ and $[x_i,x_i+\eta'_i]$, $i=1,\dots,p$, with $\phi(x_i-\eta_i)=\phi(x_i+\eta'_i)=1-\delta$, and on each of these intervals $\phi'$ is not zero.
Thus the interval $[1-\delta,1]$ satisfies the hypothesis of Lemma~\ref{le:density}. We may then apply Corollary~\ref{co:essential-abs-cont-limit} to $\mu$, and formula~\eqref{eq:density} gives $a=\sum_{i=1}^p |\psi(x_i)| L(x_i)$. Therefore $\iota_\mu$ is bounded and its essential norm is $\sum_{i=1}^p |\psi(x_i)| L(x_i)$. Part (iii) of Lemma~\ref{le:relation-embedding} finishes then the proof.
\end{proof}

One should note that both Propositions~\ref{pr:ihab1}
and~\ref{pr:ihab2} are valid for any $\Lambda$. This is a consequence
of imposing rather strong hypotheses on $\phi$ and $\psi$
(correspondingly, on the measure~$\mu$).\\

\noindent\textbf{Acknowledgment:} The authors wish to thank Pascal Lef\`evre
for helpful discussions on M\"untz spaces, starting point of this work.

\end{document}